\numberwithin{equation}{section}
\newtheorem{lem}{Lemma}[section]
\newtheorem{prop}[lem]{Proposition}
\newtheorem{theo}[lem]{Theorem}
\newtheorem{conj}[lem]{Conjecture}
\newtheorem{cor}[lem]{Corollary}
\newtheorem{exa}[lem]{Example}
\theoremstyle{definition}
\newtheorem{defi}[lem]{Definition}
\newtheorem{rem}[lem]{Remark}
\newcommand{\leref}[1]{Lemma \ref{#1}}
\newcommand{\theref}[1]{Theorem \ref{#1}}
\newcommand{\conjref}[1]{Conjecture \ref{#1}}
\newcommand{\coref}[1]{Corollary \ref{#1}}
\newcommand{\propref}[1]{Proposition \ref{#1}}
\newcommand{\remref}[1]{Remark \ref{#1}}
\newcommand{\R}{\mathbb{R}}
\newcommand{\ddt}{\frac{\textrm{d}}{\textrm{d}t}}
\newcommand{\equi}{\Leftrightarrow}
\DeclareMathOperator{\e}{e}
\begin{document}
\title{On the sum of squared logarithms inequality and related inequalities}
\author{Fozi M. Dannan\footnote{Arab International University,  Syria, email: fmdan@scs-net.org}
	\quad and\quad Patrizio Neff\footnote{Corresponding author: Patrizio Neff, Head of Lehrstuhl für Nichtlineare Analysis und Modellierung, Fakultät für Mathematik, Universität Duisburg-Essen, Thea-Leymann Str. 9, 45127 Essen, Germany, email: patrizio.neff@uni-due.de} 
	\quad and\quad Christian Thiel\footnote{Lehrstuhl für Nichtlineare Analysis und Modellierung, Fakultät für Mathematik, Universität Duisburg-Essen, Thea-Leymann Str. 9, 45127 Essen, Germany, email: christian.thiel@uni-due.de}}
\date{\today}
\maketitle

\begin{abstract}
\noindent
We consider the sum of squared logarithms inequality and investigate possible connections with the theory of majorization. We also discuss alternative sufficient conditions on two sets of vectors $a,b\in\R_+^n$ so that
\begin{equation}
\sum_{i=1}^n(\log a_i)^2\ \leq\ \sum_{i=1}^n(\log b_i)^2\,.\notag
\end{equation}
Generalizations of some inequalities from information theory are obtained, including a generalized information inequality and a generalized log sum inequality, which states for $a,b\in\R_+^n$ and $k_1,\ldots,k_n\in [0,\infty)$:
\begin{equation}
\sum_{i=1}^na_i\,\log\prod_{s=1}^m\left(\frac{a_i}{b_i} + k_s\right)\ \geq\ \log\prod_{s=1}^m(1+k_s)\,.\notag
\end{equation}
\end{abstract}

\bigskip

\textbf{Key Words:} sum of squared logarithms inequality; exponential functions inequalities; log sum inequality; Gibbs' inequality; information inequality

\bigskip

\textbf{AMS 2010 subject classification: 26D05, 26D07}

\tableofcontents
\section{Introduction - the sum of squared logarithms inequality}
The \emph{Sum of Squared Logarithms Inequality} (SSLI) was introduced in 2013 by B\^{\i}rsan, Neff and Lankeit \cite{Neff_log_inequality13}, with the authors giving a proof for $n\in\{2,3\}$. Recently, Pompe and Neff \cite{pompe2015generalised} have shown the inequality for $n=4$, in which case it reads: Let $a_1,a_2,a_3,a_4,b_1,b_2,b_3,b_4>0$ be given positive numbers such that
\begin{align}
a_1+a_2+a_3+a_4 &\ \leq\ b_1+b_2+b_3+b_4 \,, \notag\\
a_1\, a_2+a_1\, a_3+ a_2\, a_3+a_1\, a_4+a_2\, a_4+a_3\, a_4 &\ \leq\ b_1\, b_2+b_1\, b_3+ b_2\, b_3+b_1\, b_4+b_2\, b_4+b_3\, b_4 \,,\notag \\
a_1 \, a_2\, a_3+a_1\, a_2\, a_4+a_2\, a_3\, a_4+a_1\, a_3\, a_4 &\ \leq\ b_1\, b_2\, b_3+b_1\, b_2\, b_4+b_2\, b_3\, b_4+b_1\, b_3\, b_4 \,, \notag\\
a_1 \, a_2\, a_3\, a_4&\ =\ b_1\, b_2\, b_3\, b_4\,. \notag
\end{align}
Then
\begin{align}
(\log a_1)^2+(\log a_2)^2+(\log a_3)^2+(\log a_4)^2 \ \leq\ (\log b_1)^2+(\log b_2)^2+(\log b_3)^2+(\log b_4)^2 \, .\notag
\end{align}
The general form of this inequality can be conjectured as follows. 
\begin{defi}\label{defiElementarySymmetricPolynomial}
Let $x\in\R^n$. We denote by $e_k(x)$ the $k$-th \emph{elementary symmetric polynomial}, i.e.~the sum of all $\binom nk$ products of exactly $k$ components of $x$, so that
\begin{equation}
e_k(x) \colonequals \sum_{i_1<\ldots<i_k}x_{i_1}x_{i_2}\ldots x_{i_k}\qquad\textrm{for all\ \,$k\in\{1,\ldots,n\}$}\,;\notag
\end{equation}
note that $e_n(x)=x_1\cdot x_2\ldots \cdot x_n$.
\end{defi}

\begin{conj}[Sum of squared logarithms inequality]\label{conjSumOfSquaredLogarithmsInequality}
Let $a,b\in\R_+^n$. If
\begin{alignat*}{2}
e_k(a) \leq e_k(b)\qquad\textrm{for all\ \,$k\in\{1,\ldots,n-1\}$}
\end{alignat*}
and $e_n(a) = e_n(b)$, then
\begin{equation}\label{SumOfSquaredLogarithmsInequality}
\sum_{i=1}^n(\log a_i)^2\ \leq\ \sum_{i=1}^n(\log b_i)^2\tag{SSLI}\,.
\end{equation}

Alternatively, we can express the same statement as a minimization problem:

Let $a\in\R_+^n$ be given and define
\begin{equation}
\mathcal{E}_a\ \colonequals\ \bigl\{\,b\in\R_+^n\ |\ e_k(a)\leq e_k(b)\quad \textrm{for all\ \,$k\in\{1,\ldots,n-1\}$\ \ and\ \, $e_n(a)=e_n(b)$}\bigr\}\,.\notag
\end{equation}
Then
\begin{equation}
\inf_{b\in\mathcal{E}_a}\biggr\{ \sum_{i=1}^n(\log b_i)^2\biggr\}\ =\ \sum_{i=1}^n(\log a_i)^2\,.\notag
\end{equation}
\end{conj}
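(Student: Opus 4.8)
The plan is to prove the equivalent minimization statement, namely that the continuous function $F(b) \colonequals \sum_{i=1}^n (\log b_i)^2$ attains its infimum over $\mathcal{E}_a$ precisely at the multiset $b = a$. \emph{Existence of a minimizer} comes first and is the only step I expect to be routine: $\mathcal{E}_a$ is closed, being an intersection of the closed sets $\{e_k \geq e_k(a)\}$ and $\{e_n = e_n(a)\}$, and $F$ is coercive on it. Indeed, on any sublevel set $\{b \in \mathcal{E}_a : F(b) \leq C\}$ each coordinate satisfies $(\log b_i)^2 \leq C$, hence $b_i \in [e^{-\sqrt{C}}, e^{\sqrt{C}}]$, so the sublevel set is bounded and therefore compact. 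Since $b = a$ gives a finite value, a global minimizer $b^*$ exists.

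Next I would extract \emph{first-order information}. Writing $e_{k-1}(b_{-j})$ for the $(k-1)$-st elementary symmetric polynomial in the $n-1$ variables $\{b_i : i \neq j\}$, one has $\partial e_k/\partial b_j = e_{k-1}(b_{-j})$ together with the Euler identity $\sum_{j} b_j\, e_{k-1}(b_{-j}) = k\, e_k(b)$. Let $I \subseteq \{1,\dots,n-1\}$ collect the indices of the active inequality constraints at $b^*$ (those with $e_k(b^*) = e_k(a)$). Under a standard constraint qualification the Karush--Kuhn--Tucker conditions yield multipliers $\lambda_k \geq 0$ for $k \in I$ and $\mu \in \R$ with
\begin{equation}
\frac{2\log b_j^*}{b_j^*} \;=\; \sum_{k \in I} \lambda_k\, e_{k-1}(b_{-j}^*) \;+\; \mu\, e_{n-1}(b_{-j}^*)\,,\qquad j = 1,\dots,n. \notag
\end{equation}
Multiplying by $b_j^*$, summing over $j$, and invoking the Euler identity collapses this to the single scalar relation $2\log e_n(a) = \sum_{k\in I} k\,\lambda_k\, e_k(a) + \mu\, n\, e_n(a)$, which already restricts the admissible multiplier patterns.

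The decisive step is a \emph{structural analysis of $b^*$}, aiming to show that \emph{all} inequality constraints are active, i.e.\ $e_k(b^*) = e_k(a)$ for every $k$; together with $e_n(b^*) = e_n(a)$ this forces $\prod_i (t+b_i^*) = \prod_i(t+a_i)$, hence $\{b_i^*\} = \{a_i\}$ as multisets and $F(b^*) = \sum_i(\log a_i)^2$, as desired. To this end I would argue that at a feasible point with some slack constraint $e_{k_0}(b^*) > e_{k_0}(a)$ one can always build a feasible descent direction: pull two coordinates $b_p^*, b_q^*$ toward their geometric mean, which keeps $e_n$ exactly fixed and strictly lowers $(\log b_p)^2 + (\log b_q)^2$ by convexity of $t \mapsto t^2$ applied on the logarithmic scale, while the induced first-order change in the remaining $e_k$ is absorbed by the available slack in the $k_0$-th constraint. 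Showing that such a direction is always available, and that it violates no other tight constraint, would contradict minimality and close the argument.

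I expect this last step to be the genuine obstacle, and it is exactly where the difficulty of the conjecture concentrates: the transcendental left-hand side $2(\log b_j^*)/b_j^*$ is coupled to the polynomial constraint gradients in a way that resists clean bookkeeping, and the combinatorics of which constraints are simultaneously active grows rapidly with $n$ (the known proofs for $n \leq 4$ already rely on intricate case distinctions). An appealing alternative would bypass KKT entirely by passing to $\alpha_i \colonequals \log a_i$ and $\beta_i \colonequals \log b_i$, noting that $e_n(a) = e_n(b)$ gives $\sum_i \alpha_i = \sum_i \beta_i$, and attempting to prove the majorization $\alpha \prec \beta$; the Schur-convexity of $x \mapsto \sum_i x_i^2$ would then deliver \eqref{SumOfSquaredLogarithmsInequality} immediately. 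The catch---and the reason the present paper treats majorization only as a source of \emph{sufficient} conditions---is that the hypotheses $e_k(a) \leq e_k(b)$ do not in general imply $\alpha \prec \beta$, so this route can succeed only under additional assumptions.
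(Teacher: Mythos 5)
You should first be aware that the paper does not prove this statement at all: it is stated as Conjecture \ref{conjSumOfSquaredLogarithmsInequality}, known (via the cited references) only for $n\le 3$ and $n=4$, and the remainder of the paper only establishes \emph{alternative sufficient} conditions for \eqref{SumOfSquaredLogarithmsInequality} (e.g.\ Theorem \ref{theoQuotProdConditionForSSLI} via Chebyshev's sum inequality). So the question is whether your proposal settles the conjecture, and it does not. Your existence step is sound, and your final reduction is correct: if all constraints were active at a minimizer $b^*$, then $e_k(b^*)=e_k(a)$ for all $k$ forces $\prod_i(t+b_i^*)=\prod_i(t+a_i)$, hence $b^*$ is a permutation of $a$. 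The genuine gap is the descent-direction argument, and it fails for a concrete structural reason. Write $b_{-pq}$ for the vector with the $p$-th and $q$-th entries removed; then $e_k(b) = e_k(b_{-pq}) + (b_p+b_q)\,e_{k-1}(b_{-pq}) + b_p\,b_q\,e_{k-2}(b_{-pq})$, with positive coefficients. Pulling $b_p^*,b_q^*$ toward their geometric mean with $b_p^*b_q^*$ fixed strictly decreases $b_p^*+b_q^*$, and therefore strictly decreases \emph{every} $e_k$ with $1\le k\le n-1$ simultaneously. Consequently this perturbation is feasible only when \emph{all} $n-1$ inequality constraints have slack; slack in the single constraint $k_0$ cannot ``absorb'' the decrease of the other $e_k$'s, contrary to your claim, because each constraint $e_k(b)\ge e_k(a)$ must hold on its own. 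What your move actually proves is only that at a minimizer \emph{at least one} inequality constraint is active --- very far from all of them --- and the entire difficulty of the conjecture lives in the mixed active/inactive patterns that your argument cannot touch. The KKT identity you derive by summing $b_j^*$ times the stationarity conditions is correct but is one scalar equation and does not restrict the active set in any usable way; you would also need to verify a constraint qualification, since the gradients $\bigl(e_{k-1}(b_{-j}^*)\bigr)_j$ of the active constraints can be linearly dependent at points with repeated coordinates.

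Your closing caveat about the majorization route is accurate and consistent with the paper's own discussion: the hypotheses $e_k(a)\le e_k(b)$, $e_n(a)=e_n(b)$ do not imply $\log a \prec \log b$, and the paper's Remark following Theorem \ref{theoWeakMajorizationGivesPower} shows that demanding $a\prec b$ together with $e_n(a)=e_n(b)$ collapses to $a^\downarrow=b^\downarrow$ by Corollary \ref{remMajProdEquImpliesPermutation}, so Schur-convexity alone cannot decide the conjecture. For context: the full conjecture was eventually resolved, but by methods entirely unlike the KKT program sketched here --- the general proof exploits the analytic dependence of the roots of $\prod_i(t+b_i)$ on the coefficients $e_k(b)$ and complex-analytic representations of $\sum_i(\log b_i)^2$ --- which is a fair indication that the obstruction you correctly identified (the coupling of the transcendental gradient of $F$ with the polynomial constraint gradients) is not a bookkeeping nuisance but the actual mathematical content of the problem.
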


The sum of squared logarithms inequality (SSLI) has important applications in matrix analysis and nonlinear elasticity theory \cite{Neff_log_inequality13,Neff_Eidel_Osterbrink_2013, neff2014exponentiated, neff2015exponentiated, grioli2013, Neff_nagatsukasa_logpolar13}.
We notice that the previous conjecture for $n\geq 2$ puts conditions on the elementary symmetric polynomials of the numbers $a_1, \ldots , a_n$ and $b_1,\ldots , b_n$ for obtaining (SSLI).

In this article we obtain (SSLI)\ and the so called sum of powered logarithms inequality  \eqref{eqSumOfPoweredLogarithms} under some alternative conditions. We also introduce some new inequalities for the exponential functions. An extension of the log sum inequality is also obtained, which yields generalizations of the information inequality.

\section{Preliminaries}
The concept of \emph{majorization} is of great importance to \eqref{SumOfSquaredLogarithmsInequality} and related inequalities. In the following we state the basic definitions as well as some fundamental properties of majorization. For a larger survey we refer to Marshall, Olkin and Arnold \cite{marshall2010inequalities}.

There are various ways to define Majorization. Due to Hardy, Littlewood and P{\'o}lya \cite{hardy1952inequalities} we can formulate the following theorem:
\begin{theo}\label{defmaj}
Let $x,y\in\R_+^n$, then the following are equivalent:
\begin{itemize}
\item[a)] $\sum_{i=1}^nx_i^\downarrow = \sum_{i=1}^ny_i^\downarrow$ and $\sum_{i=1}^kx_i^\downarrow \leq \sum_{i=1}^ky_i^\downarrow$ for all $1\leq k\leq n-1$.
\item[b)] $\sum_{i=1}^nf(x_i)\leq \sum_{i=1}^nf(y_i)$ for all convex continous functions $f\colon \R\to\R$.
\end{itemize}
If the expressions hold, $x$ is said to be majorized by $y$, written $x\prec y$.
\end{theo}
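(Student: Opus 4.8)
The plan is to prove the two implications separately. Throughout I would first observe that both conditions are invariant under permuting the components of $x$ and, independently, of $y$: statement a) is phrased via the decreasing rearrangements, and the sums in b) are symmetric in their arguments. Hence without loss of generality I assume $x_1\geq x_2\geq\ldots\geq x_n$ and $y_1\geq\ldots\geq y_n$, and abbreviate $X_k=\sum_{i=1}^k x_i$ and $Y_k=\sum_{i=1}^k y_i$, so that a) reads $X_k\leq Y_k$ for $k<n$ together with $X_n=Y_n$.

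For the implication a)$\,\Rightarrow\,$b), fix a convex $f\colon\R\to\R$; being finite and convex on all of $\R$, it is automatically continuous and admits a subgradient at every point. The plan is to choose, for each $i$, a subgradient $d_i\in\partial f(x_i)$; since $x_1\geq\ldots\geq x_n$ and the subdifferential of a convex function is monotone, these can be arranged so that $d_1\geq d_2\geq\ldots\geq d_n$. The subgradient inequality gives $f(y_i)-f(x_i)\geq d_i\,(y_i-x_i)$, so it suffices to show $\sum_{i=1}^n d_i\,(y_i-x_i)\geq 0$. Setting $U_k=Y_k-X_k$, summation by parts rewrites this sum as $d_n\,U_n+\sum_{k=1}^{n-1}(d_k-d_{k+1})\,U_k$; here $U_n=0$ by the equality constraint, while $d_k-d_{k+1}\geq 0$ and $U_k\geq 0$ for $k<n$ by a), so the whole expression is nonnegative. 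This Abel-summation step is the heart of the argument.

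For the converse b)$\,\Rightarrow\,$a), I would first apply b) to the convex functions $f(t)=t$ and $f(t)=-t$ to obtain $X_n\leq Y_n$ and $X_n\geq Y_n$, hence $X_n=Y_n$. For the partial-sum inequalities I would use the family of convex, continuous test functions $f_c(t)=\max(t-c,0)$. The key elementary fact is the variational formula $\sum_{i=1}^k y_i^\downarrow=\min_{c\in\R}\bigl[\,kc+\sum_{i=1}^n\max(y_i-c,0)\,\bigr]$, with the minimum attained at $c=y_k^\downarrow$. Granting this, for each $k$ I bound $X_k\leq k\,y_k^\downarrow+\sum_{i=1}^n\max(x_i-y_k^\downarrow,0)$, then apply b) with $f=f_{y_k^\downarrow}$ to replace the $x$-sum by the corresponding $y$-sum, and finally recognize the result as $k\,y_k^\downarrow+\sum_{i=1}^n\max(y_i-y_k^\downarrow,0)=Y_k$.

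The main obstacle is the forward direction a)$\,\Rightarrow\,$b): the conceptual move is to linearize $f$ through a subgradient so that the target inequality collapses to a purely linear statement whose summation by parts consumes precisely the partial-sum data supplied by a). In the direction b)$\,\Rightarrow\,$a) the only nonroutine point is selecting the test functions $\max(t-c,0)$ and verifying the min-formula for the partial sums; the remaining manipulations are bookkeeping.
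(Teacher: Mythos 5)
Your proof is correct, but there is no in-paper argument to compare it against: the paper states this result as the classical characterization of majorization due to Hardy, Littlewood and P\'olya and simply cites their book, giving no proof. Your argument is a sound, self-contained proof along the standard modern lines, and both halves check out. For a)\,$\Rightarrow$\,b), taking $d_i=f'_+(x_i^\downarrow)$ (or any subgradients, chosen equal at tied values) yields the required monotone sequence $d_1\geq\ldots\geq d_n$, continuity of $f$ is indeed automatic for finite convex functions on $\R$ so the word ``continuous'' in b) costs nothing, and the Abel summation
\[
\sum_{i=1}^n d_i\,(y_i-x_i)\ =\ d_n\,U_n+\sum_{k=1}^{n-1}(d_k-d_{k+1})\,U_k\,,\qquad U_k\colonequals Y_k-X_k\,,
\]
consumes exactly the hypotheses $U_k\geq 0$ for $k<n$ and $U_n=0$. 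For b)\,$\Rightarrow$\,a), your variational formula $\sum_{i=1}^k y_i^\downarrow=\min_{c\in\R}\bigl[\,kc+\sum_{i=1}^n\max(y_i-c,0)\,\bigr]$ is correct and attained at $c=y_k^\downarrow$ (for $i\leq k$ one has $\max(y_i^\downarrow-y_k^\downarrow,0)=y_i^\downarrow-y_k^\downarrow$ and for $i>k$ the terms vanish), so the chain $X_k\leq k\,y_k^\downarrow+\sum_{i=1}^n\max(x_i-y_k^\downarrow,0)\leq k\,y_k^\downarrow+\sum_{i=1}^n\max(y_i-y_k^\downarrow,0)=Y_k$ closes the argument, with the total-sum equality supplied by the linear test functions $\pm t$. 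For context: the route many classical treatments take for a)\,$\Rightarrow$\,b) passes through doubly stochastic matrices or $T$-transforms (Birkhoff-type machinery), which your subgradient linearization avoids entirely, while your hinge functions $\max(t-c,0)$ are essentially the same device Hardy--Littlewood--P\'olya use for the converse; your version has the advantage of being elementary and short in both directions.
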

Similarly, the concept of the weak majorization can be fomulated (see Tomic \cite{tomic1949theoreme} and Weyl \cite{weyl1949inequalities}): 
\begin{theo}\label{defmajw}
Let $x,y\in\R_+^n$, then the following are equivalent:
\begin{itemize}
\item[a)] $\sum_{i=1}^kx_i^\downarrow \leq \sum_{i=1}^ky_i^\downarrow$ for all $1\leq k\leq n$.
\item[b)] $\sum_{i=1}^nf(x_i)\leq \sum_{i=1}^nf(y_i)$ for all convex monotone increasing continous functions $f\colon \R\to\R$.
\end{itemize}
If the expressions hold, $x$ is said to be weakly majorized by $y$, written $x\prec_w y$.

If $x$ is weakly majorized by $y$, we state, in particular, that it is weakly majorized from below. Analogously we can also characterize weak majorization from above ( $x \prec^w y$), were the inequality in a) holds with greater or equal for all vectors in descending order and equivalent die inequality in b) for all convex monotone decreasing functions.
\end{theo}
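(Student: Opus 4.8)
The plan is to establish the two implications separately, using the family of \emph{ramp functions} $f_t(s) \colonequals (s-t)_+ = \max\{s-t,0\}$, $t\in\R$, as the common engine, since each such $f_t$ is continuous, convex and monotone increasing and is therefore admissible in b). Throughout I would order both vectors decreasingly and abbreviate the partial sums $X_k \colonequals \sum_{i=1}^k x_i^\downarrow$ and $Y_k \colonequals \sum_{i=1}^k y_i^\downarrow$ (with $X_0=Y_0=0$), so that condition a) reads $X_k\le Y_k$ for $1\le k\le n$. The first step, used in both directions, is the piecewise-linear identity
\[
\sum_{i=1}^n (x_i-t)_+ \;=\; \max_{0\le k\le n}\,(X_k-kt)\,,\qquad t\in\R\,:
\]
writing $\phi(k)=X_k-kt$, the increment $\phi(k)-\phi(k-1)=x_k^\downarrow-t$ is nonnegative precisely while $x_k^\downarrow\ge t$, so $\phi$ rises and then falls and its maximum, attained at $m=\#\{i:x_i^\downarrow>t\}$, equals $\sum_{i\le m}(x_i^\downarrow-t)=\sum_i(x_i-t)_+$. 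Combined with $X_k\le Y_k$, this delivers at once the central comparison
\[
\sum_{i=1}^n (x_i-t)_+ \;\le\; \sum_{i=1}^n (y_i-t)_+\qquad\textrm{for every }t\in\R\,.
\]
Dually, $kt+\sum_i(x_i-t)_+=\max_m\bigl(X_m+(k-m)t\bigr)\ge X_k$ for all $t$, with equality at $t=x_k^\downarrow$, so $X_k=\min_t\bigl[kt+\sum_i(x_i-t)_+\bigr]$.

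For b) $\Rightarrow$ a) I would test b) on $f_t$ with $t=y_k^\downarrow$ for each fixed $k$. Using the min-representation first, the hypothesis b) second, and the elementary equality $k\,y_k^\downarrow+\sum_i(y_i-y_k^\downarrow)_+=Y_k$ last, one obtains
\[
X_k \;\le\; k\,y_k^\downarrow+\sum_{i=1}^n (x_i-y_k^\downarrow)_+ \;\le\; k\,y_k^\downarrow+\sum_{i=1}^n (y_i-y_k^\downarrow)_+ \;=\; Y_k\,,
\]
which is exactly a).

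For a) $\Rightarrow$ b) I would first treat $f\in C^2$ and then remove smoothness by mollification. For smooth $f$, Taylor's formula with integral remainder gives $f(x_i)=f(c)+f'(c)(x_i-c)+\int_c^d f''(t)(x_i-t)_+\,dt$ on any interval $[c,d]$ containing all entries; summing and subtracting the analogous expression for $y$ cancels the constant terms and leaves
\[
\sum_{i=1}^n f(y_i)-\sum_{i=1}^n f(x_i) \;=\; f'(c)\,(Y_n-X_n)+\int_c^d f''(t)\Bigl[\sum_i (y_i-t)_+-\sum_i (x_i-t)_+\Bigr]\,dt\,.
\]
Here $f'(c)\ge 0$ since $f$ is increasing and $Y_n-X_n\ge 0$ is the case $k=n$ of a), while $f''(t)\ge 0$ by convexity and the bracket is nonnegative by the central comparison; hence the right-hand side is $\ge 0$. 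A general continuous convex increasing $f$ is the pointwise limit on $[c,d]$ of its mollifications, which stay convex and increasing, and since only finitely many function values enter, the inequality passes to the limit. The concluding remark on weak majorization from above ($x\prec^w y$) then follows by the same scheme applied to the reflected functions $s\mapsto f(-s)$, which swaps increasing for decreasing and lower bounds for upper bounds.

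The step I expect to be most delicate is the central comparison together with its bookkeeping under repeated entries: one must check that the maximum identity is insensitive to values $x_i^\downarrow=t$ (where the ramp contributes $0$ and the corresponding increment vanishes), and that the mollification genuinely preserves both convexity and monotonicity while reducing to finitely many evaluations. The reduction to the ramp functions $f_t$ is what makes both implications transparent, collapsing the whole equivalence onto the single scalar comparison $\sum_i(x_i-t)_+\le\sum_i(y_i-t)_+$.
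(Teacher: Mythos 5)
The paper offers no proof of this theorem to compare against: it is quoted as a classical result, with pointers to Tomic, Weyl and to Marshall--Olkin--Arnold, so your argument has to stand on its own --- and it does. What you have written is a correct, self-contained rendition of the standard ``ramp function'' proof of the Tomic--Weyl characterization. Your central identity $\sum_{i=1}^n(x_i-t)_+=\max_{0\le k\le n}(X_k-kt)$ is right, including the tie-handling at entries equal to $t$ (the increments $x_k^\downarrow-t$ vanish there, so the maximizer is only non-unique, not wrong); from it the comparison $\sum_i(x_i-t)_+\le\sum_i(y_i-t)_+$ follows immediately from a), and the dual representation $X_k=\min_{t\in\R}\bigl[kt+\sum_i(x_i-t)_+\bigr]$, with equality at $t=x_k^\downarrow$, is verified correctly (for $m>k$ one uses $X_m-X_k\le (m-k)x_k^\downarrow$, for $m<k$ the reverse bound; both hold by monotonicity of the ordered entries). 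The chain $X_k\le k\,y_k^\downarrow+\sum_i(x_i-y_k^\downarrow)_+\le k\,y_k^\downarrow+\sum_i(y_i-y_k^\downarrow)_+=Y_k$ for b) $\Rightarrow$ a) is exactly right, and the Taylor-with-integral-remainder computation for a) $\Rightarrow$ b), in which the linear term is controlled by the $k=n$ case of a) and the second-order term by the central comparison, is the classical integral-representation argument; mollification does preserve convexity and monotonicity and converges pointwise for continuous $f$, so the smoothness reduction is sound.

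Two minor points you should make explicit. First, if ``monotone increasing'' in b) were read strictly, the ramp $f_t$ would not qualify; this is repaired by testing with $f_t(s)+\varepsilon s$ and letting $\varepsilon\to 0^+$, and in any case the intended reading (nondecreasing, as in the cited literature) admits $f_t$ directly. Second, your reflection trick $s\mapsto f(-s)$ for the $\prec^w$ statement takes you out of $\R_+^n$, to which the theorem is nominally restricted; since your entire proof never uses positivity of the entries, you should simply remark that the equivalence holds on all of $\R^n$ (or translate the vectors), after which the duality $x\prec^w y\Leftrightarrow -x\prec_w -y$ gives the claimed characterization with reversed partial-sum inequalities and convex decreasing test functions. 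With those two sentences added, the proof is complete.
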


The following lemma, which shows elementary properties of the so-called \emph{(weak) logarithmic majorization}, follows directly from the logarithmic laws and the monotonicity of the logarithm.
\begin{lem}[Logarithmic majorization]\label{defiLogarithicMajorization}
Let $x,y\in \R_+^n$. Then
\begin{alignat}{2}
\log x \prec_w \log y&\qquad\textrm{if and only if}\qquad x_1^\downarrow\cdot\ldots\cdot x_k^\downarrow\ \leq\ y_1^\downarrow\cdot\ldots\cdot y_k^\downarrow\quad\textrm{for all\ \,$k\in\{1,\ldots,n\}$}\,,\notag \\
\log x \prec^w \log y&\qquad\textrm{if and only if}\qquad x_1^\uparrow\cdot\ldots\cdot x_k^\uparrow\ \geq\ y_1^\uparrow\cdot\ldots\cdot y_k^\uparrow\quad\textrm{for all\ \,$k\in\{1,\ldots,n\}$}\,,\notag \\
\log x \prec \log y&\qquad\textrm{if and only if}\qquad \log x \prec_w \log y\quad \textrm{and}\quad x_1\cdot\ldots\cdot x_n = y_1\cdot\ldots\cdot y_n\,,\notag
\end{alignat}
where we abbreviate $\log z \colonequals (\log z_1, \log z_2, \ldots, \log z_n)$ for $z\in\R_+^n$.
\end{lem}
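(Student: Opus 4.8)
The plan is to obtain all three equivalences from a single elementary observation combined with \theref{defmaj} and \theref{defmajw}. The observation is that, since $t\mapsto\log t$ is strictly increasing on $\R_+$, it commutes with sorting: the descending rearrangement of the vector $\log x$ is obtained by first sorting $x$ and then applying the logarithm componentwise, i.e.\ $(\log x)_i^\downarrow=\log(x_i^\downarrow)$ for every $i$, and likewise $(\log x)_i^\uparrow=\log(x_i^\uparrow)$ for the ascending rearrangement. I would isolate this as the one genuine (if trivial) input and use it throughout, together with the logarithm law $\sum\log=\log\prod$ and the injectivity/strict monotonicity of $\log$ (equivalently of $\exp$).

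For the first line I would unfold $\log x\prec_w\log y$ via part a) of \theref{defmajw}, which reads $\sum_{i=1}^k(\log x)_i^\downarrow\le\sum_{i=1}^k(\log y)_i^\downarrow$ for all $k\in\{1,\dots,n\}$. Using the commutation above and $\sum\log=\log\prod$, each such partial sum becomes $\log\bigl(\prod_{i=1}^k x_i^\downarrow\bigr)$ and $\log\bigl(\prod_{i=1}^k y_i^\downarrow\bigr)$. Since $\log$ is strictly monotone, the inequality between these logarithms holds exactly when $\prod_{i=1}^k x_i^\downarrow\le\prod_{i=1}^k y_i^\downarrow$, which is the asserted product condition. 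Every step in this chain is an equivalence, so both directions are handled at once.

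The second line is the analogous computation carried out with ascending rearrangements: writing $\log x\prec^w\log y$ through the ascending partial-sum inequalities $\sum_{i=1}^k(\log x)_i^\uparrow\ge\sum_{i=1}^k(\log y)_i^\uparrow$, and using that the increasing function $\log$ preserves ascending order so that $(\log x)_i^\uparrow=\log(x_i^\uparrow)$, the same passage $\sum\log=\log\prod$ followed by monotonicity yields $\prod_{i=1}^k x_i^\uparrow\ge\prod_{i=1}^k y_i^\uparrow$ for all $k$. For the third line I would invoke \theref{defmaj}: $\log x\prec\log y$ means $\log x\prec_w\log y$ together with equality of the full sums $\sum_{i=1}^n\log x_i=\sum_{i=1}^n\log y_i$. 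The latter is $\log\prod_{i=1}^n x_i=\log\prod_{i=1}^n y_i$, hence by injectivity of $\log$ equivalent to $x_1\cdots x_n=y_1\cdots y_n$, while the weak part is already supplied by the first line.

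Since every step is an equivalence, no direction needs separate treatment and there is no real obstacle beyond the bookkeeping of the two rearrangements. The only point deserving care is the commutation $(\log x)^\downarrow=\log(x^\downarrow)$ and its ascending analogue: it relies essentially on $\log$ being \emph{increasing}, so that the ordering of the components is preserved rather than reversed; for a decreasing function the roles of $\uparrow$ and $\downarrow$ would interchange. I expect this to be the sole place where monotonicity is used in an essential way, with the logarithm law and the injectivity of $\exp$ supplying the remaining equivalences.
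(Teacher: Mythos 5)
Your proposal is correct and takes essentially the same route as the paper, which offers no detailed proof but states that the lemma ``follows directly from the logarithmic laws and the monotonicity of the logarithm'' --- precisely the three ingredients you isolate: the commutation of the increasing function $\log$ with the rearrangements $(\log x)_i^\downarrow = \log(x_i^\downarrow)$ and $(\log x)_i^\uparrow = \log(x_i^\uparrow)$, the identity $\sum\log = \log\prod$, and the partial-sum characterizations from \theref{defmaj} and \theref{defmajw}. Your writeup simply makes explicit the chain of equivalences the paper leaves to the reader, with the third line correctly reduced to the first plus injectivity of $\log$ applied to the full products.
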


\begin{prop}\label{propImplicationsForLogMajorization}
Let $x,y \in\R_+^n$. Then
\begin{alignat}{5}
&&\log x &\prec_w \log y\quad&&\textrm{implies}&\quad x &\prec_w y&&\notag\\
&&x &\prec^w y\quad&&\textrm{implies}&\quad \log x &\prec^w \log y\,.&&\notag
\end{alignat}
\vspace{-5.5em}

\textrm{\\and}

\vspace{.5em}
\end{prop}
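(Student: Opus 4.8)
The plan is to reduce both implications to the convex-function characterizations of weak majorization in Theorem~\ref{defmajw} b) (and the \emph{from above} analog recorded in the same theorem), exploiting that post-composing a suitable test function with $\exp$ or $\log$ stays within the relevant function class. The only real ingredient is an elementary composition lemma for convex functions, which I would state and prove first, directly from the definition of convexity so that non-smooth functions cause no difficulty: (i) if $g\colon\R\to\R$ is convex and nondecreasing and $h$ is convex, then $g\circ h$ is convex; (ii) if $g$ is convex and nonincreasing and $h$ is concave, then $g\circ h$ is convex. Each follows in a single line: substitute the convexity (resp.\ concavity) estimate for $h$ into the argument of $g$, use the monotonicity of $g$ to keep (resp.\ reverse) the inequality, and close with the convexity of $g$. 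No differentiability is needed.

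For the first implication I would take an arbitrary convex, monotone increasing, continuous $g\colon\R\to\R$ and set $f\colonequals g\circ\exp$. Since $\exp$ is convex and increasing and $g$ is convex increasing, fact (i) shows that $f$ is convex, and $f$ is increasing as a composition of increasing maps. Applying the characterization of $\log x\prec_w\log y$ (Theorem~\ref{defmajw} b), used for the vectors $\log x,\log y$) to the test function $f$ yields $\sum_i f(\log x_i)\le\sum_i f(\log y_i)$, that is, $\sum_i g(x_i)\le\sum_i g(y_i)$. As $g$ was an arbitrary convex increasing continuous function, Theorem~\ref{defmajw} b) gives $x\prec_w y$.

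For the second implication I would dually take an arbitrary convex, monotone decreasing, continuous $g$ and set $f\colonequals g\circ\log$. Now $\log$ is concave and increasing and $g$ is convex decreasing, so fact (ii) shows that $f$ is convex, while $f$ is decreasing as a composition of a decreasing and an increasing map. Feeding $f$ into the \emph{from above} characterization of $x\prec^w y$ gives $\sum_i f(x_i)\le\sum_i f(y_i)$, i.e.\ $\sum_i g(\log x_i)\le\sum_i g(\log y_i)$, and since $g$ was an arbitrary convex decreasing continuous function this is precisely $\log x\prec^w\log y$. (A minor point: $f=g\circ\log$ is a priori only defined on $\R_+$, but since $x,y\in\R_+^n$ we only ever evaluate it at positive arguments, and it extends to a convex decreasing function on all of $\R$ if one wishes to match the literal statement of the theorem.)

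I expect the main point to be conceptual rather than computational, namely getting the monotonicity directions to match the two composition facts. The argument works exactly because \emph{increasing convex} is stable under post-composition with the convex increasing map $\exp$, and \emph{decreasing convex} is stable under post-composition with the concave increasing map $\log$. This is also what forces the asymmetry in the statement, where the two implications run in opposite directions between the $x,y$-level and the $\log$-level: the reverse implications would require $g\circ\log$ to be convex for every convex increasing $g$, or $g\circ\exp$ to be convex for every convex decreasing $g$, and neither composition preserves convexity.
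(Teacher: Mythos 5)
Your proof is correct, and it is worth noting that the paper offers no proof of this proposition at all -- it is stated bare, as a known consequence of standard majorization theory -- so there is no in-paper argument to deviate from. Your route through the functional characterizations of Theorem~\ref{defmajw} is the classical one and stays entirely within the paper's toolkit: the two composition facts (convex increasing composed with convex is convex; convex decreasing composed with concave is convex) are exactly what make $g\circ\exp$ an admissible test function for the first implication and $g\circ\log$ for the second, and your closing remark correctly identifies the source of the asymmetry in the statement.

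Two technical points, neither fatal. First, Theorem~\ref{defmajw} is stated for vectors in $\R_+^n$, but you apply it to $\log x$ and $\log y$, which may have negative entries; the Tomic--Weyl characterization is valid for arbitrary real vectors, so this is a mismatch with the paper's literal statement rather than with the mathematics, but it deserves a sentence. Second, your parenthetical claim that $f=g\circ\log$ ``extends to a convex decreasing function on all of $\R$'' is false in general: for $g(u)=\e^{-u}$ one gets $f(t)=1/t$ on $\R_+$, which tends to $+\infty$ as $t\to 0^+$ and therefore admits no finite convex extension through $0$. The correct repair is not an extension but a modification: choose $m$ with $0<m<\min_i\{x_i,y_i\}$ and continue $f$ affinely to the left of $m$ along a subgradient of $f$ at $m$. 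The resulting function is convex, decreasing, defined on all of $\R$, and agrees with $g\circ\log$ at every point where your argument evaluates it, so the conclusion $\sum_{i=1}^n g(\log x_i)\le\sum_{i=1}^n g(\log y_i)$ survives unchanged. With that one-line fix the proof is complete.
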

From \theref{defmaj} we can see, the mapping $\varphi\colon \R^n\to\R$ with $x\mapsto\sum_{i=1}^nf(x_i)$ where $f$ is convex has the property
\begin{equation}
x\prec y\quad\textrm{implies}\quad\varphi(x)\leq\varphi(y)\,.\notag
\end{equation}
We call this property \emph{Schur-convexity}. if $-\varphi$ is Schur-convex, so we call $\varphi$ Schur-concave. Analogously to \theref{defmaj} and \theref{defmajw} we can generalize to a important property of Schur-convex functions:

\begin{prop}\label{propSchurConvexityWithWeakMajorization}
Let $\varphi\colon D\subseteq\R^n\to\R$ be Schur-convex. If $\varphi$ is also monotone increasing (resp. monotone decreasing), then
\begin{equation}
\varphi(x)\ \leq\ \varphi(y)\qquad\textrm{for all\ \,$x,y\in D$ with $x \prec_w y$ (resp. $x\prec^w y$)}\,.\notag
\end{equation}
\end{prop}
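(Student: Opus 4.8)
The plan is to bridge the gap between weak and ordinary majorization by inserting an intermediate vector, and then to exploit the two hypotheses separately: monotonicity to compare $x$ with the intermediate vector, and Schur-convexity to compare that vector with $y$. As a preliminary remark I would note that every Schur-convex $\varphi$ is automatically symmetric --- if $x'$ is obtained from $x$ by permuting its entries, then $x\prec x'$ and $x'\prec x$ both hold, so $\varphi(x)=\varphi(x')$ --- which lets me rearrange the entries of any argument into descending order at will and read the hypothesis $x\prec_w y$ directly off the ordered partial sums $\sum_{i=1}^k x_i^\downarrow\leq\sum_{i=1}^k y_i^\downarrow$ of \theref{defmajw}.

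The key step, and the one I expect to be the main obstacle, is the following interpolation property of weak majorization (compare \cite{marshall2010inequalities}): if $x\prec_w y$, then there exists a vector $u$ with $x_i\leq u_i$ for all $i$ and $u\prec y$. To produce such a $u$ I would distribute the nonnegative surplus $D\colonequals\sum_{i=1}^n y_i-\sum_{i=1}^n x_i$ among the entries of $x$ so that the full sums coincide, $\sum_{i=1}^n u_i=\sum_{i=1}^n y_i$, while keeping every ordered partial sum capped by that of $y$, i.e.\ $\sum_{i=1}^k u_i^\downarrow\leq\sum_{i=1}^k y_i^\downarrow$ for all $k$. The delicate point is that one cannot simply add all of $D$ to the smallest entry, since the vector so obtained may violate $u\prec y$ once it is re-sorted; instead the surplus must be filled in gradually, raising the entries from the smallest upwards and pooling entries that become equal, so that both the descending order and the partial-sum caps survive at each stage. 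A short induction on the number of indices at which the partial-sum inequality is still strict (equivalently, a water-filling argument) should then deliver the desired $u$; when $x\in\R_+^n$ this construction keeps all entries nonnegative, so $u$ remains in the relevant domain.

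Granting this interpolation lemma, the conclusion follows by chaining the two hypotheses: the monotonicity of $\varphi$ together with $x_i\leq u_i$ for all $i$ gives $\varphi(x)\leq\varphi(u)$, and the Schur-convexity of $\varphi$ together with $u\prec y$ gives $\varphi(u)\leq\varphi(y)$, whence $\varphi(x)\leq\varphi(y)$. The decreasing case is entirely analogous: for $x\prec^w y$ the dual interpolation lemma furnishes a $u$ with $u_i\leq x_i$ for all $i$ and $u\prec y$; since $\varphi$ is now monotone decreasing, $u_i\leq x_i$ yields $\varphi(x)\leq\varphi(u)$, and Schur-convexity again yields $\varphi(u)\leq\varphi(y)$.
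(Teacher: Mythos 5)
Your argument is correct, but note that the paper itself offers no proof of this proposition at all --- it is asserted by analogy with Theorems \ref{defmaj} and \ref{defmajw} --- so you are supplying a proof where the authors supply none. What you give is the standard textbook argument: the interpolation (or ``intermediate vector'') characterization of weak majorization, namely that $x \prec_w y$ holds if and only if there exists $u$ with $x_i \leq u_i$ for all $i$ and $u \prec y$, and dually $x \prec^w y$ if and only if there exists $u$ with $u_i \leq x_i$ and $u \prec y$ (see \cite{marshall2010inequalities}, 5.A.9 and 5.A.9.a); chaining monotonicity against $u$ and Schur-convexity against $y$ then finishes both cases, exactly as you write. Your water-filling sketch of the interpolation lemma is the right idea, though as you anticipate it is the delicate part: when an intermediate cap $\sum_{i=1}^k u_i^\downarrow = \sum_{i=1}^k y_i^\downarrow$ becomes tight before the total sums agree, the filling must continue strictly below rank $k$, and verifying that the remaining surplus always fits requires the induction you allude to; since this is a known result you cite, the sketch is acceptable. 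Two small points worth making explicit. First, your preliminary remark that Schur-convexity forces symmetry (via $x \prec x'$ and $x' \prec x$ for permutations) is correct and needed to reduce to sorted vectors. Second, there is a domain subtlety the paper glosses over and your proof inherits: the intermediate vector $u$ must lie in $D$ for $\varphi(u)$ to make sense. On $D = \R_+^n$, which is the setting actually used in the paper, this is automatic --- in the increasing case $u \geq x \geq 0$, and in the decreasing case $u \prec y$ forces $u_n^\downarrow \geq y_n^\downarrow \geq 0$, since $\sum_{i=1}^{n-1} u_i^\downarrow \leq \sum_{i=1}^{n-1} y_i^\downarrow$ with equal totals --- but for an arbitrary $D \subseteq \R^n$ the proposition as stated needs $D$ to contain such intermediate vectors, a hypothesis missing from the paper's formulation rather than a gap in your argument.
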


\begin{theo}\label{theoElementarySymmetricPolynomialsAreSchurConvex_neu}
The elementary symmetric polynomials $e_k\colon\R^n\to\R$ are Schur-concave and monotone increasing for all\ \,$k\in\{1,\ldots,n\}$ and even strictly Schur-concave for $k\geq 2$.
\end{theo}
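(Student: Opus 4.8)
The plan is to verify all three assertions through the Schur--Ostrowski criterion (see Marshall, Olkin and Arnold \cite{marshall2010inequalities}): a symmetric, continuously differentiable map $\varphi$ on the positive orthant is Schur-concave precisely when $(x_i-x_j)\bigl(\partial_{x_i}\varphi-\partial_{x_j}\varphi\bigr)\le 0$ for all $i\ne j$, with strict Schur-concavity following once this inequality is strict whenever $x_i\ne x_j$. Since each $e_k$ is a symmetric polynomial, it is smooth and symmetric, so the criterion applies; I work on $\R_+^n$, where the notion of majorization from \theref{defmaj} lives and where the sign information below is available.

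First I would record the elementary decomposition of $e_k$ with respect to two distinct indices $i\ne j$. Writing $x^{(i)}$ and $x^{(i,j)}$ for the vector $x$ with the indicated coordinates deleted, and sorting the defining products according to whether they contain $x_i$ and/or $x_j$, one obtains
\begin{equation}
e_k(x)\ =\ x_i x_j\, e_{k-2}(x^{(i,j)}) + (x_i+x_j)\,e_{k-1}(x^{(i,j)}) + e_k(x^{(i,j)})\,,\notag
\end{equation}
with the usual conventions $e_0\equiv 1$ and $e_{-1}\equiv 0$. Differentiating gives $\partial_{x_i}e_k(x)=e_{k-1}(x^{(i)})$, which on $\R_+^n$ is a sum of products of positive numbers and hence nonnegative (indeed strictly positive, since $n-1\ge k-1$); this already establishes that $e_k$ is monotone increasing. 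Subtracting the analogous expression for $\partial_{x_j}e_k$ and inserting the decomposition, the common terms $e_{k-1}(x^{(i,j)})$ cancel and I am left with the clean identity
\begin{equation}
\partial_{x_i}e_k(x)-\partial_{x_j}e_k(x)\ =\ (x_j - x_i)\,e_{k-2}(x^{(i,j)})\,.\notag
\end{equation}

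Plugging this into the Schur--Ostrowski expression yields $(x_i-x_j)\bigl(\partial_{x_i}e_k-\partial_{x_j}e_k\bigr)= -(x_i-x_j)^2\,e_{k-2}(x^{(i,j)})$, which is $\le 0$ on $\R_+^n$ because $e_{k-2}(x^{(i,j)})\ge 0$; this proves Schur-concavity for every $k$. For the strict statement I would note that when $x_i\ne x_j$ the factor $(x_i-x_j)^2$ is strictly positive, so it suffices to show $e_{k-2}(x^{(i,j)})>0$ for $k\ge 2$. For $k=2$ this is immediate since $e_0\equiv 1$, and for $k\ge 3$ the quantity $e_{k-2}(x^{(i,j)})$ is a nonempty sum of products of strictly positive numbers, the index count $n-2\ge k-2$ guaranteeing at least one such product. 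The only points that demand care are the sign bookkeeping in the Schur-\emph{concave} (rather than Schur-convex) version of the criterion and precisely this dimension count $n-2\ge k-2$, which keeps $e_{k-2}(x^{(i,j)})$ from vanishing; note also that the same identity makes an equivalent, purely elementary argument possible, showing directly that a Robin-Hood transfer bringing two coordinates together strictly increases $e_k$.
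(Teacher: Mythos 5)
There is no proof in the paper to compare you against: the theorem is stated as a known fact from majorization theory, with the Marshall--Olkin--Arnold survey cited at the start of Section~2 serving as the implicit source. Your blind proof is the standard argument and it is correct. The decomposition $e_k(x)=x_ix_j\,e_{k-2}(x^{(i,j)})+(x_i+x_j)\,e_{k-1}(x^{(i,j)})+e_k(x^{(i,j)})$ is valid with the stated conventions; $\partial_{x_i}e_k(x)=e_{k-1}(x^{(i)})>0$ on $\R_+^n$ (using $k-1\le n-1$) gives monotonicity; and the resulting identity $(x_i-x_j)\bigl(\partial_{x_i}e_k-\partial_{x_j}e_k\bigr)=-(x_i-x_j)^2\,e_{k-2}(x^{(i,j)})$ together with the count $k-2\le n-2$ yields Schur-concavity, strictly for $k\ge 2$. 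Two points deserve flagging. First, your restriction to the positive orthant is not merely convenient but necessary: on all of $\R^n$, as the theorem is literally phrased, the claims fail for $k\ge 2$ --- for instance $(1,1,-1)\prec(2,0,-1)$ while $e_3(1,1,-1)=-1<0=e_3(2,0,-1)$, and $\partial_{x_i}e_2$ can be negative, so monotonicity fails too; your reading is the right one and is the only one the paper uses (\coref{corMajorizationimpliesElementaryPolynomialInequalities1} applies the theorem to $x,y\in\R_+^n$, and \theref{defmaj} only defines majorization there). Second, the strict form of the Schur--Ostrowski criterion that you invoke (strict inequality whenever $x_i\ne x_j$ implies strict Schur-concavity) is genuinely stronger than the plain criterion and should either be cited precisely or replaced by your own closing remark, which in fact suffices: since $x\prec y$ with $x^\downarrow\ne y^\downarrow$ is reached from $y$ by finitely many T-transforms, at least one nontrivial, and your identity shows that any nontrivial Robin-Hood transfer strictly increases $e_k$ once $e_{k-2}(x^{(i,j)})>0$, the strict statement for $k\ge 2$ follows elementarily. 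With either of these finishes made explicit, your proof is complete and would be a worthwhile addition, since it makes the paper self-contained at exactly the point where \coref{corMajorizationimpliesElementaryPolynomialInequalities1} and \coref{remMajProdEquImpliesPermutation} rest on this theorem.
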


\begin{cor}\label{corMajorizationimpliesElementaryPolynomialInequalities1}
Let $x,y\in\R_+^n$. Then $x\prec y$ implies $e_k(x)\geq e_k(y)$ for all\ \,$k\in\{1,\ldots,n\}$. However $e_k(x)\geq e_k(y)$ does not imply $x\prec y$ in general: For that consider $a=(2,2,2)$ and $b=(1,1,1)$.
\end{cor}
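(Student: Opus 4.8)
The plan is to derive the first assertion directly from \theref{theoElementarySymmetricPolynomialsAreSchurConvex_neu} and then to verify the claimed counterexample by an explicit computation.

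For the implication $x \prec y$ implies $e_k(x) \geq e_k(y)$, I would invoke the fact, already established in \theref{theoElementarySymmetricPolynomialsAreSchurConvex_neu}, that each elementary symmetric polynomial $e_k$ is Schur-concave. By the definition of Schur-concavity adopted in the excerpt, this means precisely that $-e_k$ is Schur-convex, so that $x \prec y$ yields $-e_k(x) \leq -e_k(y)$, i.e.\ $e_k(x) \geq e_k(y)$, for every $k \in \{1,\ldots,n\}$. This handles all $k$ simultaneously and requires no case distinction; the only point demanding care is keeping straight the direction of the inequality induced by Schur-\emph{concavity} as opposed to Schur-convexity.

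For the second assertion I would take $a = (2,2,2)$ and $b = (1,1,1)$ in $\R_+^3$ and compute the three elementary symmetric polynomials of each: $e_1(a) = 6$, $e_2(a) = 12$, $e_3(a) = 8$, while $e_1(b) = 3$, $e_2(b) = 3$, $e_3(b) = 1$. Thus $e_k(a) \geq e_k(b)$ holds for every $k$. To see that nevertheless $a \not\prec b$, I would appeal to the characterization of majorization in part a) of \theref{defmaj}: the relation $a \prec b$ would force the equality of sums $\sum_{i} a_i = \sum_{i} b_i$, yet here $\sum_{i} a_i = 6 \neq 3 = \sum_{i} b_i$, so this necessary condition fails and $a \prec b$ is impossible.

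There is no genuine obstacle in this argument. The first part is a one-line consequence of the Schur-concavity already proved, and the second part is a direct numerical check against the defining conditions of majorization. It is worth noting that the failure of $a \prec b$ is already detected at the level of the first symmetric polynomial, since majorization demands $e_1(a) = e_1(b)$, which is violated here.
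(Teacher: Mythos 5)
Your proposal is correct and follows exactly the route the paper intends: the corollary is stated as an immediate consequence of the Schur-concavity of $e_k$ from \theref{theoElementarySymmetricPolynomialsAreSchurConvex_neu}, and your deduction $x \prec y \Rightarrow -e_k(x) \leq -e_k(y) \Rightarrow e_k(x) \geq e_k(y)$ together with the explicit check $e_k(a) \geq e_k(b)$ but $\sum_i a_i = 6 \neq 3 = \sum_i b_i$ is precisely the argument the paper leaves implicit. Your closing observation that the failure of $a \prec b$ is already visible at $e_1$, since majorization forces equality of the sums, is a correct and apt refinement.
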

If $x$ ist not only a permutation of $y$, the inequality between the $e_k$ is even strict for $k\geq 2$. In inverse conclusion we can say:
\begin{cor}\label{remMajProdEquImpliesPermutation}
If $x\prec y$ and $e_k(x)=e_k(y)$ for any $k\in\{2,\ldots,n\}$, then $x^\downarrow = y^\downarrow$ and thus $e_k(x)=e_k(y)$ for every $k\in\{2,\ldots,n\}$.
\end{cor}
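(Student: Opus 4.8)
The plan is to argue by contraposition, using the \emph{strict} Schur-concavity of the elementary symmetric polynomials. Recall that \theref{theoElementarySymmetricPolynomialsAreSchurConvex_neu} asserts that for each $k\geq 2$ the map $e_k$ is strictly Schur-concave; unwinding the definition, this means precisely that whenever $x\prec y$ but $x^\downarrow\neq y^\downarrow$, one has the \emph{strict} inequality $e_k(x) > e_k(y)$.

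First I would fix an index $k\in\{2,\ldots,n\}$ for which the hypothesis $e_k(x)=e_k(y)$ holds, and suppose toward a contradiction that $x^\downarrow\neq y^\downarrow$. Since we are given $x\prec y$ and since $e_k$ is strictly Schur-concave for this $k$, the strict clause of \theref{theoElementarySymmetricPolynomialsAreSchurConvex_neu} forces $e_k(x) > e_k(y)$, contradicting $e_k(x)=e_k(y)$. Hence $x^\downarrow = y^\downarrow$, which is the first assertion.

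For the second assertion I would simply observe that once $x^\downarrow=y^\downarrow$, the vectors $x$ and $y$ are rearrangements of one another. Because each $e_j$ is a symmetric function of its arguments by \deref{defiElementarySymmetricPolynomial}, it is invariant under permutations of the components, so $e_j(x)=e_j(y)$ for all $j\in\{1,\ldots,n\}$, and in particular for every $j\in\{2,\ldots,n\}$ as claimed.

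The only genuinely delicate point is making sure the strict form of Schur-concavity is actually invoked: the non-strict inequality $e_k(x)\geq e_k(y)$ supplied by \coref{corMajorizationimpliesElementaryPolynomialInequalities1} does not suffice, since it permits $e_k(x)=e_k(y)$ without $x^\downarrow=y^\downarrow$ (for instance it cannot by itself rule out distinct rearranged tuples). The whole argument therefore rests on the phrase \textbf{``even strictly Schur-concave for $k\geq 2$''} in \theref{theoElementarySymmetricPolynomialsAreSchurConvex_neu}, and beyond this citation no computation is required.
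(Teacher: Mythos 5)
Your proposal is correct and takes essentially the same route as the paper, which states the corollary precisely as the ``inverse conclusion'' of the strictness clause of \theref{theoElementarySymmetricPolynomialsAreSchurConvex_neu}: for $x\prec y$ with $x^\downarrow\neq y^\downarrow$ the inequality $e_k(x)>e_k(y)$ is strict for $k\geq 2$, so equality for any single such $k$ forces $x^\downarrow=y^\downarrow$. Your closing step, deducing $e_j(x)=e_j(y)$ for all $j$ from permutation invariance of the symmetric polynomials, is likewise exactly the implicit content of the paper's ``and thus'' clause.
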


\section{Different conditions for SSLI and the sum of powered logarithms inequality}
In the following theorems we give different conditions that guarantee the
validity of \eqref{SumOfSquaredLogarithmsInequality}. We will use Chebyshev's sum inequality in certain cases, which states:

\begin{lem}[Chebyshev's sum inequality]\label{lemChebychevsSumInequality}
If $a_1 \leq a_2 \leq \ldots \leq a_n$ and $b_1 \leq b_2 \leq \ldots \leq b_n$ are two monotone increasing sequences of real numbers, then
\begin{equation}\label{eqChebychev}
\sum_{i=1}^na_i\,b_i\ \geq\ \frac 1n\left(\sum_{i=1}^na_i\right)\left(\sum_{i=1}^nb_i\right)\ \geq\ \sum_{i=1}^na_i\,b_{n+1-i}\,.
\end{equation}
\end{lem}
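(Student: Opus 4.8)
The plan is to reduce both inequalities to a single algebraic identity together with a sign argument. First I would establish the well-known double-sum identity
\begin{equation}
n\sum_{i=1}^n a_i\,b_i - \left(\sum_{i=1}^n a_i\right)\left(\sum_{i=1}^n b_i\right) \;=\; \frac12\sum_{i=1}^n\sum_{j=1}^n (a_i-a_j)(b_i-b_j)\,,\notag
\end{equation}
which follows by expanding the right-hand side and collecting the four resulting double sums: the cross terms $-a_ib_j$ and $-a_jb_i$ reproduce (twice) the product of the two full sums, while the terms $a_ib_i$ and $a_jb_j$ each reproduce $n\sum_i a_ib_i$. This identity is purely formal and holds for arbitrary real sequences.

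For the left inequality I would then invoke monotonicity: since both sequences are increasing, for every pair $(i,j)$ the factors $a_i-a_j$ and $b_i-b_j$ share the same sign, so each summand $(a_i-a_j)(b_i-b_j)$ is nonnegative. Hence the right-hand side of the identity is nonnegative, which upon dividing by $n$ yields $\sum_{i=1}^n a_ib_i \geq \frac1n\left(\sum_i a_i\right)\left(\sum_i b_i\right)$.

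For the right inequality I would apply the very same identity to the reversed sequence $c_i \colonequals b_{n+1-i}$. The key observation is that reversal leaves the total sum unchanged, $\sum_{i=1}^n c_i = \sum_{i=1}^n b_i$, while turning $b$ into a decreasing sequence $c_1 \geq \cdots \geq c_n$. Consequently $a_i-a_j$ and $c_i-c_j$ now have opposite signs, each product $(a_i-a_j)(c_i-c_j)$ is nonpositive, and the identity gives $\sum_{i=1}^n a_i c_i \leq \frac1n\left(\sum_i a_i\right)\left(\sum_i c_i\right) = \frac1n\left(\sum_i a_i\right)\left(\sum_i b_i\right)$, which is precisely the claimed bound $\sum_{i=1}^n a_i\,b_{n+1-i} \leq \frac1n\left(\sum_i a_i\right)\left(\sum_i b_i\right)$.

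The argument is entirely elementary; the only mild obstacle is the bookkeeping in verifying the double-sum identity, which I would handle by expanding $(a_i-a_j)(b_i-b_j) = a_ib_i - a_ib_j - a_jb_i + a_jb_j$ and summing each of the four terms over $i,j$ separately. No deeper machinery, such as the majorization theory developed above, is needed here, since Chebyshev's inequality enters only as an auxiliary tool in the subsequent theorems.
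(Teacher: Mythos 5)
Your proof is correct, and there is in fact nothing in the paper to compare it against: the paper states Chebyshev's sum inequality as a known classical result and uses it without proof. Your argument is the standard and complete one. The double-sum identity
\begin{equation}
n\sum_{i=1}^n a_i\,b_i - \left(\sum_{i=1}^n a_i\right)\left(\sum_{i=1}^n b_i\right) \;=\; \frac12\sum_{i=1}^n\sum_{j=1}^n (a_i-a_j)(b_i-b_j)\notag
\end{equation}
expands exactly as you describe, the sign argument for two increasing sequences settles the left inequality, and the reversal trick $c_i \colonequals b_{n+1-i}$ — with the correct observation that reversal preserves $\sum_i c_i = \sum_i b_i$ while flipping the monotonicity, so every product $(a_i-a_j)(c_i-c_j)$ is nonpositive — settles the right inequality. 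Both halves follow from one identity plus a sign check, which is as elementary as the lemma's role in the paper warrants.
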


\begin{theo}\label{theoQuotProdConditionForSSLI}
Let $a,b\in\R_+^n$ and there exists a rearrangement of $a$ and $b$ that satisfy
\begin{equation}
\frac{b_1}{a_1} \geq \frac{b_2}{a_2} \geq \ldots \geq \frac{b_n}{a_n}\quad\textrm{and}\quad a_1\,b_1 \geq a_2\,b_2 \geq \ldots \geq a_n\,b_n\,.
\end{equation}
If we additionally assume one of the following two conditions
\begin{equation}
e_n(a) \leq e_n(b)\quad\textrm{and}\quad e_n(a)\cdot e_n(b)\geq 1\,,\tag{3.2a}\label{beda}\\
\end{equation}
\vspace{-3.5em}

\textrm{\\or}

\vspace{-1.5em}
\begin{equation}
e_n(a) \geq e_n(b)\quad\textrm{and}\quad e_n(a)\cdot e_n(b)\leq 1\,,\tag{3.2b}\label{bedb}\addtocounter{equation}{1}
\end{equation}
then we get the sum of squared logarithms inequality \eqref{SumOfSquaredLogarithmsInequality}:
\begin{equation}
\sum_{i=1}^n(\log a_i)^2\ \leq\ \sum_{i=1}^n(\log b_i)^2\,.\notag
\end{equation}
\end{theo}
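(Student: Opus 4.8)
The plan is to reduce the whole statement to Chebyshev's sum inequality (\leref{lemChebychevsSumInequality}) by means of the elementary difference-of-squares factorization. The starting point is the identity
\begin{equation}
(\log b_i)^2 - (\log a_i)^2 \ =\ \left(\log\tfrac{b_i}{a_i}\right)\cdot\log(a_i\,b_i)\,,\notag
\end{equation}
which rewrites the quantity we want to control as
\begin{equation}
\sum_{i=1}^n(\log b_i)^2 - \sum_{i=1}^n(\log a_i)^2 \ =\ \sum_{i=1}^n\left(\log\tfrac{b_i}{a_i}\right)\log(a_i\,b_i)\,.\notag
\end{equation}
It therefore suffices to prove that this last sum is nonnegative.

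Next I would bring in the two monotonicity hypotheses. Setting $p_i\colonequals\log\tfrac{b_i}{a_i}$ and $q_i\colonequals\log(a_i b_i)$, the assumption $\tfrac{b_1}{a_1}\geq\ldots\geq\tfrac{b_n}{a_n}$ together with monotonicity of the logarithm gives $p_1\geq\ldots\geq p_n$, and likewise $a_1b_1\geq\ldots\geq a_nb_n$ gives $q_1\geq\ldots\geq q_n$. Thus $(p_i)$ and $(q_i)$ are two similarly ordered (both decreasing) sequences, so Chebyshev's sum inequality applies—after reversing the indices to match the increasing-sequence statement of \leref{lemChebychevsSumInequality}, which leaves the relevant sums unchanged—and yields
\begin{equation}
\sum_{i=1}^n p_i\,q_i \ \geq\ \frac1n\left(\sum_{i=1}^n p_i\right)\left(\sum_{i=1}^n q_i\right)\,.\notag
\end{equation}

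The remaining step is to verify that the right-hand side is nonnegative, and this is exactly where the two alternative conditions enter. The sums collapse, via the product, into the top elementary symmetric polynomials:
\begin{align}
\sum_{i=1}^n p_i &\ =\ \log\prod_{i=1}^n\frac{b_i}{a_i}\ =\ \log\frac{e_n(b)}{e_n(a)}\,, \notag\\
\sum_{i=1}^n q_i &\ =\ \log\prod_{i=1}^n a_i\, b_i\ =\ \log\bigl(e_n(a)\,e_n(b)\bigr)\,. \notag
\end{align}
Under condition \eqref{beda} both logarithms are nonnegative (since $e_n(a)\leq e_n(b)$ forces $\tfrac{e_n(b)}{e_n(a)}\geq 1$, and $e_n(a)e_n(b)\geq 1$), while under condition \eqref{bedb} both are nonpositive; in either case their product is $\geq 0$. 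Combining this with the Chebyshev estimate gives $\sum_i p_i q_i\geq 0$, which is precisely \eqref{SumOfSquaredLogarithmsInequality}.

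I do not anticipate any genuine obstacle: once the difference-of-squares factorization is noticed, the argument is a direct application of Chebyshev. The only point requiring care is the sign bookkeeping for the product of the two sums, and this is exactly what dictates the two-case structure \eqref{beda}/\eqref{bedb}—each case being tailored so that the factors $\log\tfrac{e_n(b)}{e_n(a)}$ and $\log(e_n(a)e_n(b))$ carry the same sign, so that their product stays nonnegative.
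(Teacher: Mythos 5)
Your proposal is correct, and for case \eqref{beda} it coincides exactly with the paper's argument: the same factorization $(\log b_i)^2-(\log a_i)^2=\log\tfrac{b_i}{a_i}\cdot\log(a_ib_i)$, the same application of Chebyshev's sum inequality (\leref{lemChebychevsSumInequality}) to the two similarly ordered sequences, and the same identification of the resulting sums as $\log\tfrac{e_n(b)}{e_n(a)}$ and $\log\bigl(e_n(a)\,e_n(b)\bigr)$. Where you genuinely diverge is case \eqref{bedb}: the paper handles it by a separate reduction, substituting $\tilde a_k\colonequals 1/a_{n+1-k}$ and $\tilde b_k\colonequals 1/b_{n+1-k}$, verifying that $\tilde a,\tilde b$ satisfy the rearrangement hypothesis and condition \eqref{beda}, and then transferring \eqref{SumOfSquaredLogarithmsInequality} back via $\bigl(\log\tfrac1x\bigr)^2=(\log x)^2$. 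You instead notice that the Chebyshev estimate holds verbatim under either condition, and that under \eqref{bedb} both factors $\log\tfrac{e_n(b)}{e_n(a)}$ and $\log\bigl(e_n(a)\,e_n(b)\bigr)$ are nonpositive, so their product is again nonnegative. Your unified sign argument is shorter and arguably cleaner --- it disposes of both cases in a single computation and makes transparent why the hypotheses come in the mirrored pair \eqref{beda}/\eqref{bedb}, namely that each forces the two logarithmic factors to share a sign; the paper's reciprocal substitution buys instead an explicit display of the symmetry $a_i\mapsto a_i^{-1}$, $b_i\mapsto b_i^{-1}$ underlying the theorem, at the cost of roughly a page of verification. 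A minor point in your favor: your explicit index reversal to match the increasing-sequence formulation of \leref{lemChebychevsSumInequality} is stated more carefully than the paper's own reindexing, whose notation ($\tilde a_{n+k-1}\colonequals\log\tfrac{b_i}{a_i}$) is garbled as printed.
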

\begin{proof}
First we assume Condition \eqref{beda}:\\
Due to the monotonicity of the logarithm it follows from the assumptions that
\begin{equation}
\log\frac{b_1}{a_1} \geq \log\frac{b_2}{a_2} \geq \ldots \geq \log\frac{b_n}{a_n}\quad\textrm{and}\quad \log (a_1\,b_1) \geq \log (a_2\,b_2) \geq \ldots \geq \log (a_n\,b_n)\,.\notag
\end{equation}
Now we can estimate with Chebychev's inequality \eqref{eqChebychev} using $\tilde a_{n+k-1}\colonequals\log\frac{b_i}{a_i}$ and $\tilde b_{n+k-1}\colonequals\log a_i\,b_i:$
\begin{alignat}{2}
\sum_{i=1}^n(\log b_i)^2 - \sum_{i=1}^n(\log a_i)^2\quad&=\quad \sum_{i=1}^n \bigl((\log b_i)^2 - (\log a_i)^2\bigr)\quad=\quad \sum_{i=1}^n (\log b_i - \log a_i)(\log b_i + \log a_i)\notag\\
&=\quad \sum_{i=1}^n \;\log \frac{b_i}{a_i} \,\cdot\, \log b_ia_i\quad\overset{\eqref{eqChebychev}}{\geq}\quad \frac 1n \biggl(\sum_{i=1}^n \log\frac{b_i}{a_i}\biggr)\biggl(\sum_{i=1}^n \log b_ia_i\biggr)\notag\\
&=\quad \frac 1n \biggl(\log \underbrace{\prod_{i=1}^n \frac{b_i}{a_i}}_{e_n(b)/e_n(a)}\biggr)\biggl(\log\underbrace{\prod_{i=1}^n b_ia_i}_{e_n(a)\,e_n(b)}\biggr)\quad\geq\quad \frac 1n\log 1\log 1\quad=\quad 0\,.\notag
\end{alignat}
Finally $\sum_{i=1}^n(\log y_i)^2 - \sum_{i=1}^n(\log a_i)^2 \geq 0$ is equivalent to \eqref{SumOfSquaredLogarithmsInequality}.\\[.5em]
Next we assume condition \eqref{bedb}:\\
We set $\tilde a, \tilde b\in\R_+^n$ with $\tilde a_k \colonequals \frac 1{a_{n+1-k}}$ and $\tilde b_k \colonequals \frac 1{b_{n+1-k}}$ for $k\in\{1,\ldots,n\}$, so that we have $(\tilde a_1,\tilde a_2,\ldots, \tilde a_n) = (a_n^{-1},a_{n-1}^{-1},\ldots,a_1^{-1})$ and $(\tilde b_1,\tilde b_2,\ldots, \tilde b_n) = (b_n^{-1},b_{n-1}^{-1},\ldots,b_1^{-1})$. Then
\begin{equation}
\frac{\tilde b_k}{\tilde a_k} = \frac{\ \frac 1{b_{n+1-k}}\ }{\ \frac 1{a_{n+1-k}}\ } = \frac{a_{n+1-k}}{b_{n+1-k}}\quad\textrm{and}\quad \tilde a_k\tilde b_k = \frac 1{a_{n+1-k}b_{n+1-k}}\,.\notag
\end{equation}
Hence
\begin{equation}
\frac{\tilde b_1}{\tilde a_1} = \frac{a_n}{b_n} \geq \frac{\tilde b_2}{\tilde a_2} = \frac{a_{n-1}}{b_{n-1}} \geq \ldots \geq \frac{\tilde b_n}{\tilde a_n}\quad\textrm{and}\quad \tilde a_1\tilde b_1=\frac 1{a_nb_n} \geq \tilde a_2\tilde b_2 = \frac 1{a_{n-1}b_{n-1}} \geq\ldots\geq \tilde a_n\tilde b_n\,.\notag
\end{equation}
Furthermore
\begin{equation}
e_n(\tilde b) = \frac 1{e_n(b)} \geq \frac 1{e_n(a)} = e_n(\tilde a)\quad\textrm{and}\quad e_n(\tilde a)e_n(\tilde b) = \frac 1{e_n(a)}\cdot\frac 1{e_n(b)} \geq 1\,,\notag
\end{equation}
thus $\tilde a$ and $\tilde b$ satisfy condition \eqref{beda}. Therefore \eqref{SumOfSquaredLogarithmsInequality} holds for $\tilde a$ and $\tilde b$, and we find
\begin{align}
&\sum_{i=1}^n(\log \tilde a_i)^2\ \leq\ \sum_{i=1}^n(\log \tilde b_i)^2\qquad\equi\qquad\sum_{i=1}^n\biggl(\log \frac 1{a_{n+1-i}}\biggr)^2\ \leq\ \sum_{i=1}^n\biggl(\log \frac 1{b_{n+1-i}}\biggr)^2\notag\\
\equi\qquad&\sum_{i=1}^n\biggl(\log \frac 1{a_i}\biggr)^2\ \leq\ \sum_{i=1}^n\biggl(\log \frac 1{b_i}\biggr)^2\qquad\equi\qquad\sum_{i=1}^n\bigl(-\log a_i\bigr)^2\ \leq\ \sum_{i=1}^n\bigl(-\log b_i\bigr)^2\\
\equi\qquad&\sum_{i=1}^n\bigl(\log a_i\bigr)^2\ \leq\ \sum_{i=1}^n\bigl(\log b_i\bigr)^2\qedhere\notag
\end{align}
\end{proof}

\begin{exa}
Neither the conditions in \conjref{conjSumOfSquaredLogarithmsInequality} are stronger than in \theref{theoQuotProdConditionForSSLI} nor conversely:
\begin{itemize}
\item[i)] With $a=(14,2,10)$ and $b=(20,2,7)$ we have $e_k(a) \leq e_k(b)$ for all\ \,$k\in\{1,\ldots,n-1\}$ and $e_n(a)=e_n(b)$ but there is no rearrangement of $a$ and $b$ that satisfies $\frac{b_1}{a_1}\geq\frac{b_2}{a_2}\geq\frac{b_3}{a_3}$ and $a_1\,b_1\geq a_2\,b_2\geq a_3\,b_3$.
\item[ii)] With $a=(6,5,7)$ and $b=(10,8,3)$ we have $\frac{10}6\geq\frac 85\geq \frac 37$ and $6\cdot 10\geq 5\cdot 8\geq 7\cdot 3$. Because of $e_n(a)=210$ and $e_n(b)=240$ we have $e_n(a)\leq e_n(b)$ and $e_n(a)\,e_n(b)\geq 1$ but not $e_n(a)=e_n(b)$.
\item[iii)] With $a=(2,2,2)$ and $b=(4,2,1)$ we have $e_k(a) \leq e_k(b)$ for all\ \,$k\in\{1,\ldots,n-1\}$ and $e_n(a)=e_n(b)$. Moreover $\frac 42\geq \frac 22\geq 12$, $2\cdot 4\geq 2\cdot 2\geq 2\cdot 1$ and $e_n(a)\,e_n(b)\geq 1$.
\end{itemize}
\end{exa}

\begin{theo}[sum of powered logarithms inequality]\label{theoWeakMajorizationGivesPower}
Let $a,b\in\R^n$ and $p\in\R$ with $a_i>1$, $b_i>1$ and $p<0$. Assume $a \prec^w b$. Then
\begin{equation}\label{eqSumOfPoweredLogarithms}
\sum_{i=1}^n(\log a_i)^p\ \leq\ \sum_{i=1}^n(\log b_i)^p\,.
\end{equation}
\end{theo}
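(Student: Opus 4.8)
The plan is to recognise both sides as values of a single separable function $\varphi(x)=\sum_{i=1}^n g(x_i)$ with $g(t)=(\log t)^p$, and then to invoke \propref{propSchurConvexityWithWeakMajorization}: weak majorization from above turns into the desired inequality as soon as $\varphi$ is shown to be Schur-convex and monotone decreasing on the relevant domain $(1,\infty)^n$. Since the hypotheses force $a_i,b_i>1$, this domain is the natural one, and it is also where $g$ is well defined.

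First I would fix $g\colon(1,\infty)\to\R$, $g(t)\colonequals(\log t)^p$. Because $t>1$ forces $\log t>0$, the quantity $(\log t)^p$ is a well-defined positive real even for non-integer $p$, so $g$ is smooth there. Differentiating twice gives
\[
g''(t)=\frac{p}{t^2}\,(\log t)^{p-2}\,\bigl(p-1-\log t\bigr).
\]
The sign check is immediate and is exactly where both hypotheses enter: $\tfrac{1}{t^2}>0$ and $(\log t)^{p-2}>0$ always; the factor $p$ is negative since $p<0$; and $p-1-\log t<0$ because $p-1<0$ and $\log t>0$. The product of two negative factors with a positive prefactor is positive, so $g''>0$ and $g$ is convex on $(1,\infty)$. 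The same hypotheses make $g$ decreasing, since $g'(t)=\tfrac{p}{t}(\log t)^{p-1}<0$.

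From here the argument is purely structural. As recorded just after \theref{defmaj}, any map $x\mapsto\sum_i f(x_i)$ assembled from a convex $f$ is Schur-convex; hence $\varphi(x)=\sum_i g(x_i)$ is Schur-convex on $(1,\infty)^n$, and because each summand is decreasing, $\varphi$ is monotone decreasing. With $a\prec^w b$ and $a_i,b_i>1$, \propref{propSchurConvexityWithWeakMajorization} (the ``$\prec^w$, monotone decreasing'' case) yields $\varphi(a)\le\varphi(b)$, which is precisely \eqref{eqSumOfPoweredLogarithms}.

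I do not expect a genuine obstacle. The entire theorem collapses to the one-variable convexity and monotonicity of $g$, and the only place demanding care is confirming that $p<0$ together with $\log t>0$ (guaranteed by $t>1$) forces $g''>0$; were either hypothesis dropped the sign could flip — e.g. $p\in(0,1)$ yields $g''<0$ — so the stated restrictions are used essentially. A secondary point worth making explicit is that the domain is $(1,\infty)^n$ rather than all of $\R^n$, which is why I route the conclusion through \propref{propSchurConvexityWithWeakMajorization} (stated for $D\subseteq\R^n$) instead of directly through \theref{defmajw}~b), whose test functions $f$ are nominally defined on all of $\R$.
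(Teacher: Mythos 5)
Your proof is correct, but it takes a genuinely different route from the paper's. The paper transports the majorization through the logarithm first: from $a \prec^w b$ it invokes \propref{propImplicationsForLogMajorization} to obtain $\log a \prec^w \log b$, sets $x\colonequals\log a$, $y\colonequals\log b$, and then only needs the elementary power function $g(z)=z^p$ on $\R_+$, whose convexity and monotone decrease are immediate ($g'(z)=p\,z^{p-1}<0$, $g''(z)=p(p-1)z^{p-2}>0$), before concluding via the majorization characterization (the proof cites \theref{defmaj}, though the $\prec^w$ variant \theref{defmajw} is what is meant). You instead keep the weak majorization on the original variables $a\prec^w b$ and push all the work into the one-variable analysis of the composite $g(t)=(\log t)^p$; your computation $g''(t)=\tfrac{p}{t^2}(\log t)^{p-2}\bigl(p-1-\log t\bigr)$ is correct, and the sign analysis is valid on $(1,\infty)$ under $p<0$. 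What your route buys is a self-contained argument that avoids \propref{propImplicationsForLogMajorization} altogether, plus a more transparent accounting of exactly where both hypotheses enter — your observation that $p\in(0,1)$ flips the sign of $g''$ makes the necessity of $p<0$ visible in a way the paper's factored approach does not. The cost is a heavier differentiation, and you must (as you correctly do) route the conclusion through \propref{propSchurConvexityWithWeakMajorization} on the restricted domain $(1,\infty)^n$ rather than through \theref{defmajw}~b) verbatim; note that this domain wrinkle is not unique to your approach, since the paper's own proof likewise applies the characterization to $z^p$ defined only on $\R_+$. Both arguments are sound; yours is more elementary per ingredient, the paper's is structurally lighter per step.
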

\begin{rem}
In order for $(\log a_i)^p$ and $(\log b_i)^p$ to be well defined for all $p\in\R$, we must assume $a_i >1$ and $b_i>1$.
\end{rem}
\begin{proof}
From $a \prec^w b$ with \propref{propImplicationsForLogMajorization} we obtain
\begin{equation}
\sum_{i=1}^k \log a^\uparrow_i\ \geq\ \sum_{i=1}^k \log b^\uparrow_i\qquad\textrm{for all\ \,$k\in\{1,\ldots,n\}$}\,.\notag
\end{equation}
Let $x,y\in\R_+^n$ with $x\colonequals\log a$, $y\colonequals\log b$ (therefore $x_i \colonequals \log a_i$ and $y_i \colonequals \log b_i$ for all\ \,$k\in\{1,\ldots,n\}$) then $x \prec^w y$. We now consider the function $g\colon\R_+\to\R$ with $g(z) = z^p$, then
\begin{equation}
g'(z)\ =\ \underbrace{p}_{<0}\cdot \underbrace{z^{p-1}}_{>0}\ <\ 0\qquad\textrm{and}\qquad g''(z) =\ \underbrace{p\,(p-1)}_{>0}\cdot \underbrace{z^{p-2}}_{>0}\ >\ 0\,.\notag
\end{equation}
For this reason $g$ is monotone decreasing and convex. With \theref{defmaj} we obtain $\sum_{i=1}^n x_i^p\ \leq\ \sum_{i=1}^n y_i^p$. Resubstitution now directly yields the statement.
\end{proof}

\begin{rem}
$a\prec b$ is a condition too weak and $a\prec b$ plus $e_n(a)=e_n(b)$ is too strong for \eqref{SumOfSquaredLogarithmsInequality}:

For $a=(3,2,2)$ and $b=(4,2,1)$ we get $a\prec b$ but $2.17 \approx \sum(\log a_i)^2 < \sum(\log b_i)^2 \approx 2.40$.

For $a=(4,4,4)$ and $b=(10,1,1)$ we get $a\prec b$ but $5.77 \approx \sum(\log a_i)^2 > \sum(\log b_i)^2 \approx 5.30$.

We have shown by \leref{corMajorizationimpliesElementaryPolynomialInequalities1} that $a\prec b$ implies $e_k(a)\geq e_k(b)$ (note the reverse inequality).

What about using $a\prec b$ and $e_n(a)=e_n(b)$ as sufficient requirements for \eqref{SumOfSquaredLogarithmsInequality}?

We can easily show $a\prec b$ and $e_n(a)=e_n(b)$ imply the logarithmic majorization $\log a\prec \log b$. Since the mapping $t\mapsto t^2$ is convex, it follows from \theref{defmaj} that the inequality $\sum_{i=1}^n (\log a_i)^2 \leq \sum_{i=1}^n(\log b_i)^2$ holds. However, for $a\prec b$, we can apply \coref{corMajorizationimpliesElementaryPolynomialInequalities1} to find $e_k(a)\geq e_k(b)$, hence \conjref{conjSumOfSquaredLogarithmsInequality} implies $\sum_{i=1}^n (\log a_i)^2 \geq \sum_{i=1}^n(\log b_i)^2$ and thus $\sum_{i=1}^n (\log a_i)^2 = \sum_{i=1}^n(\log b_i)^2$. This is not surprising: we already know (see \remref{remMajProdEquImpliesPermutation}) that
\begin{equation}
a\prec b\quad\textrm{and}\quad e_n(a)=e_n(b)\quad\Rightarrow\quad a^\downarrow=b^\downarrow\,.\notag
\end{equation}
Thus the vectors $a$ and $b\in\R_+^n$ are equal up to permutations.
\end{rem}

\section{Related inequalities}
\begin{prop}\label{propLogMajorizationImpliesExp}
Let $x,y\in\R_+^n$ and $m\in\R_+$. Assume additionally $\log x \prec_w \log y$, then
\begin{equation}
\sum_{i=1}^n\e^{m\,x_i}\ \leq\ \sum_{i=1}^n\e^{m\,y_i}.
\end{equation}
\end{prop}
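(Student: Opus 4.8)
The plan is to reduce the statement to the characterization of weak majorization in \theref{defmajw} by feeding it a suitable convex, monotone increasing test function. The subtlety is purely one of bookkeeping: the hypothesis $\log x \prec_w \log y$ is phrased in terms of the logarithms, whereas the conclusion involves $x$ and $y$ directly, so the argument must first transport the majorization from the logarithmic scale back to the original scale.

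First I would invoke \propref{propImplicationsForLogMajorization}, whose first implication asserts exactly that $\log x \prec_w \log y$ implies $x \prec_w y$. This is the conceptual core of the proof, since it converts the exponential-scale assumption into an ordinary weak majorization statement on which the majorization machinery of \theref{defmajw} can act.

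Next I would introduce the function $f\colon\R\to\R$ given by $f(t) = \e^{m\,t}$ and verify that it belongs to the admissible class in part b) of \theref{defmajw}. Because $m>0$, one has $f'(t) = m\,\e^{m\,t} > 0$ and $f''(t) = m^2\,\e^{m\,t} > 0$, so $f$ is continuous, monotone increasing and convex. Applying the implication a)$\Rightarrow$b) of \theref{defmajw} to the relation $x \prec_w y$ with this particular $f$ then gives $\sum_{i=1}^n \e^{m\,x_i} \leq \sum_{i=1}^n \e^{m\,y_i}$, which is the assertion; resubstitution is not even needed since we work with $x$ and $y$ throughout.

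I do not expect a genuine obstacle here, as the entire weight of the result rests on the two cited facts, \propref{propImplicationsForLogMajorization} and \theref{defmajw}. The only place demanding a moment of care is checking that $f(t)=\e^{m\,t}$ really lies in the class of convex, monotone increasing functions for every admissible $m$, which the sign computations on $f'$ and $f''$ settle immediately.
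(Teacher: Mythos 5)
Your proposal is correct and follows exactly the paper's own proof: it passes from $\log x \prec_w \log y$ to $x \prec_w y$ via \propref{propImplicationsForLogMajorization}, then applies \theref{defmajw} with the convex, monotone increasing function $t\mapsto\e^{m\,t}$. Your explicit verification of $f'>0$ and $f''>0$ is a small addition the paper leaves implicit, but otherwise the arguments coincide.
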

\begin{proof}
We set $\varphi\colon \R^n\to R$ with $\varphi(x) = \sum_{i=1}^n\e^{m\,x_i}$. With \propref{propImplicationsForLogMajorization} we have $x \prec_w y$. Since $x\mapsto \e^{m\,x}$ is convex and monotone increasing, it follows directly from \theref{defmajw} that $\varphi(x) \leq \varphi(y)$.
\end{proof}

\begin{theo}\label{theoCardano}
If the real numbers $a,b,c,x,y,z$ satisfy
\begin{equation}
a + b + c\ =\ x + y + z\ =\ 0\notag
\end{equation}
and
\begin{equation}
a^2 + b^2 + c^2\ =\ x^2 + y^2 + z^2\ \neq\ 0\,,\notag
\end{equation}
then
\begin{equation}\label{u1}
\e^{xy} + \e^{yz} + \e^{zx}\ <\ \left(\e^{x^2} + \e^{y^2} + \e^{z^2}\right)\exp\Biggl(-3\sqrt[3]{\frac{1}4a^2b^2c^2}\Biggr)
\end{equation}
and
\begin{equation}\label{u2}
\e^{ab} + \e^{bc} + \e^{ca}\ <\ \left(\e^{a^2} + \e^{b^2} + \e^{c^2}\right)\exp\Biggl(-3\sqrt[3]{\frac{1}4x^2y^2z^2}\Biggr)\,.
\end{equation}
\end{theo}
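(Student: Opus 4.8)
The plan is to collapse the three-term exponential sums on each side into a single scalar relation, and then to recognise what remains as the ``three real roots'' (discriminant) condition for a depressed cubic — which is what the label \emph{Cardano} is pointing at. The first step exploits only the linear constraint $x+y+z=0$. Writing $T\colonequals x^2+y^2+z^2$ and using $z=-(x+y)$ gives $z^2=x^2+2xy+y^2$, hence $2xy=2z^2-T$, i.e.\ $xy=z^2-\tfrac T2$; cyclically $yz=x^2-\tfrac T2$ and $zx=y^2-\tfrac T2$. Therefore
\[
\e^{xy}+\e^{yz}+\e^{zx}\;=\;\e^{-T/2}\bigl(\e^{x^2}+\e^{y^2}+\e^{z^2}\bigr).
\]
Since $\e^{x^2}+\e^{y^2}+\e^{z^2}>0$, inequality \eqref{u1} is \emph{equivalent} to the scalar statement $\e^{-T/2}<\exp\!\bigl(-3\sqrt[3]{\tfrac14 a^2b^2c^2}\bigr)$, i.e.\ to $T>6\sqrt[3]{\tfrac14 a^2b^2c^2}$. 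By the hypothesis $T=a^2+b^2+c^2=x^2+y^2+z^2$ this is a condition on the triple $a,b,c$ alone; running the same computation with $a,b,c$ reduces \eqref{u2} to $T>6\sqrt[3]{\tfrac14 x^2y^2z^2}$.

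Next I would clear the cube roots. Since cubing shows $6\sqrt[3]{\tfrac14 w}=(54\,w)^{1/3}$ for $w\ge0$ (indeed $6\cdot4^{-1/3}=3\cdot2^{1/3}=54^{1/3}$), the target for \eqref{u1} becomes the polynomial inequality
\[
\bigl(a^2+b^2+c^2\bigr)^3\;>\;54\,(abc)^2,
\]
and analogously $(x^2+y^2+z^2)^3>54\,(xyz)^2$ for \eqref{u2}. Everything has thus reduced to one inequality for a single real triple summing to zero.

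The heart of the proof is therefore: for real $a,b,c$ with $a+b+c=0$ one has $(a^2+b^2+c^2)^3\ge54\,(abc)^2$. I would prove this by observing that $a,b,c$ are the roots of the depressed cubic $t^3+e_2\,t-e_3$ with $e_2=ab+bc+ca=-\tfrac12(a^2+b^2+c^2)=-\tfrac T2$ and $e_3=abc$; its discriminant is $-4e_2^3-27e_3^2=\tfrac{T^3}{2}-27(abc)^2$, and this is non-negative precisely because the cubic has three real roots. (Self-containedly, fixing $T$ and extremising $(abc)^2$ subject to $a+b+c=0$ leads to the same bound: the extreme values of $abc$ for fixed $e_2$ occur exactly at a double root, say $a=b$, $c=-2a$, where $(abc)^2=4a^6=T^3/54$.) Either way $54(abc)^2\le T^3$, with the symmetric conclusion for $x,y,z$.

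The main obstacle — and the one point requiring genuine care rather than routine algebra — is the \emph{strictness}. The discriminant vanishes, equivalently $54(abc)^2=T^3$, exactly when two of $a,b,c$ coincide, and in that degenerate case \eqref{u1} becomes an \emph{equality}: for instance $a=b=1,\ c=-2$ together with $x=\sqrt3,\ y=-\sqrt3,\ z=0$ (both triples have $T=6$) makes both sides of \eqref{u1} equal to $2+\e^{-3}$. Hence the strict inequality in \eqref{u1} holds precisely when $a,b,c$ are pairwise distinct (three distinct real roots, $\Delta>0$), and symmetrically \eqref{u2} requires $x,y,z$ pairwise distinct; establishing this equality characterisation of the discriminant, and thereby justifying the strict sign, is the step I expect to demand the most attention.
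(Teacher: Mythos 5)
Your proposal is correct and, at its core, travels the same route as the paper: the paper likewise reduces everything to the quantity $p \colonequals ab+bc+ca = xy+yz+zx$, proves the termwise identities $yz = x^2+p$, $zx = y^2+p$, $xy = z^2+p$ (exactly your $xy = z^2-\tfrac T2$ etc., since $(x+y+z)^2=0$ gives $p=-\tfrac T2$), and extracts the bound on $p$ from Cardano's discriminant of the depressed cubic having $x,y,z$ (resp.\ $a,b,c$) as its roots. You differ usefully in two places. First, instead of summing three one-sided termwise estimates as the paper does, you assemble the identities into the exact factorization $\e^{xy}+\e^{yz}+\e^{zx} = \e^{-T/2}\bigl(\e^{x^2}+\e^{y^2}+\e^{z^2}\bigr)$, which makes \eqref{u1} \emph{equivalent} to the scalar inequality $T^3>54\,(abc)^2$ — and this equivalence is what hands you the equality analysis for free. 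Second, you offer a self-contained extremal proof of $T^3\ge 54\,(abc)^2$ that bypasses the classification of cubic roots entirely; that sketch should be carried out in full (compactness of the circle $\{a+b+c=0,\ a^2+b^2+c^2=T\}$, plus a Lagrange-multiplier or substitution argument showing extrema of $abc$ occur only at a double root), but it is routine.

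Your insistence on the strictness question is not pedantry — it exposes a genuine flaw in the paper's own argument. The paper asserts $D=(q/2)^2+(p/3)^3<0$ on the grounds that the cubic ``has exactly three real roots'', but $D<0$ characterizes three \emph{pairwise distinct} real roots; if two of $a,b,c$ coincide, then $D=0$ and only the non-strict bound $p\le -3\sqrt[3]{a^2b^2c^2/4}$ follows. Your example confirms the failure is real: $a=b=1$, $c=-2$, $x=\sqrt3$, $y=-\sqrt3$, $z=0$ satisfy every hypothesis of the theorem, yet both sides of \eqref{u1} equal $2+\e^{-3}$. So the strict inequality \eqref{u1} as printed is false in degenerate cases, and your characterization — \eqref{u1} is strict precisely when $a,b,c$ are pairwise distinct, and \eqref{u2} precisely when $x,y,z$ are — is the correct repair of the statement.
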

\begin{proof}
For $\alpha,\beta,\gamma\in\R$ we obtain $(\alpha+\beta+\gamma)^2 = \alpha^2 + \beta^2 + \gamma^2 + 2\alpha\beta + 2\beta\gamma + 2\gamma\alpha$, therefore under the conditions
\begin{alignat}{2}
ab + bc + ca\quad&=\quad\frac 12\left((a+b+c)^2 - (a^2 + b^2 + c^2)\right)\notag\\
&=\quad\frac 12\left((x+y+z)^2 - (x^2 + y^2 + z^2)\right)\quad=\quad xy + yz + zx\notag
\end{alignat}
and we can set
\begin{equation}
p\ \colonequals\ ab + bc + ca\ =\ xy + yz + zx\,.\notag
\end{equation}
Furthermore
\begin{equation}
x^3 + px - xyz\quad =\quad x^3 + x^2y + xyz + x^2z - xyz\quad =\quad x^2(x+y+z)\quad =\quad x^2\cdot 0\quad =\quad 0\notag
\end{equation}
and analogously
\begin{equation}
y^3+py - xyz\ =\ 0\qquad\textrm{and}\qquad z^3 + pz - xyz\ =\ 0\,.\notag
\end{equation}
Therefore the cubic equation $X^3 + pX - xyz=0$ has exactly the three solutions $X\in\{x,y,z\}$. 

Following Cardano's method (see Cardano \cite{cardano1545ars}) the cubic equation $X^3 + pX + q=0$ has exactly three real roots, if and only if
\begin{equation}
D\colonequals \left(\frac q2\right)^2 + \left(\frac p3\right)^3 < 0\,.\notag
\end{equation}
Therefore we obtain
\begin{equation}
\left(\frac{-xyz}2\right)^2 + \left(\frac p3\right)^3\ =\ \frac{x^2y^2z^2}4+\frac{p^3}{27}\ <\ 0\,,\notag
\end{equation}
thus $p^3 < -27\frac{x^2y^2z^2}4$ and therefore
\begin{equation}\label{p1}
p\ =\ xy + yz + zx\ <\ -3\sqrt[3]{\frac{x^2y^2z^2}4}\,.
\end{equation}
Analogously we obtain from $X^3 + pX - abc=0$
\begin{equation}\label{p2}
p\ =\ ab + bc + ca\ <\ -3\sqrt[3]{\frac{a^2b^2c^2}4}\,.
\end{equation}
Now
\begin{equation}
p\quad =\quad p + x^2 - x^2\quad =\quad xy + yz + zx + x^2 - x^2\quad =\quad yz - x^2\,,\notag
\end{equation}
thus
\begin{equation}
yz = x^2 + p\notag
\end{equation}
and analogously $xz = y^2 + p$ and $xy = z^2 + p$. According to \eqref{p2} and the monotonicity of the exponential function we have
\begin{equation}
\e^{yz}\ <\ \e^{x^2}\exp\left(-3\sqrt[3]{\frac{a^2b^2c^2}4}\right)\,,\quad\e^{zx}\ <\ \e^{y^2}\exp\left(-3\sqrt[3]{\frac{a^2b^2c^2}4}\right)\,,\quad\e^{xy}\ <\ \e^{z^2}\exp\left(-3\sqrt[3]{\frac{a^2b^2c^2}4}\right)\notag
\end{equation}
and summing up we obtain \eqref{u1}. The proof of \eqref{u2} proceeds analogously.
\end{proof}

\begin{theo}\label{theoOrderedFunktionAndDerivateImpliesSumOverExpFunctionIsMonotone}
Let $I\subseteq \R$ and assume $f_1,\ldots,f_n\colon I\to \R$ with
\begin{equation}
\sum_{i=1}^nf_i(t)\ =\ 0\quad\textrm{and}\quad f_1(t)\ \leq\ f_2(t)\ \leq\ \ldots\ \leq f_n(t)\qquad\textrm{for all\ \,$t\in I$}\notag
\end{equation}
 and $g\colon I\to\R$ with
\begin{equation}
g(t)\ =\ \sum_{i=1}^n\e^{f_i(t)}\qquad\textrm{for all\ \,$t\in I$}\,.\notag
\end{equation}
\begin{itemize}
\item[i)] If $f'_1(t) \leq f'_2(t) \leq \ldots \leq f'_n(t)$ for all\ \,$t\in I$, then $g'(t)\geq 0$ for all\ \,$t\in I$, respectively $g$ is monotone increasing. 
\item[ii)] If $f'_1(t) \geq f'_2(t) \geq \ldots \geq f'_n(t)$ for all\ \,$t\in I$, then $g'(t)\leq 0$ for all\ \,$t\in I$, respectively $g$ is monotone decreasing.
\end{itemize}
\end{theo}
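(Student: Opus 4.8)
\section*{Proof proposal}

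The plan is to differentiate $g$ and recognize the resulting expression as a scalar product of two monotone sequences, so that Chebyshev's sum inequality \leref{lemChebychevsSumInequality} applies directly.

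First I would compute, by the chain rule, $g'(t) = \sum_{i=1}^n f_i'(t)\,\e^{f_i(t)}$. The crucial structural observation is that differentiating the constraint $\sum_{i=1}^n f_i(t) = 0$ yields $\sum_{i=1}^n f_i'(t) = 0$ for all $t \in I$. Moreover, since $f_1(t) \leq \ldots \leq f_n(t)$ and the exponential is monotone increasing, the weights $w_i \colonequals \e^{f_i(t)}$ form a monotone increasing sequence $w_1 \leq \ldots \leq w_n$ for each fixed $t$. Thus $g'(t) = \sum_{i=1}^n f_i'(t)\,w_i$ is a pairing of the sequence $f_i'(t)$ with the increasing sequence $w_i$, and the sum of the first factor over $i$ vanishes.

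For part i), the hypothesis $f_1'(t) \leq \ldots \leq f_n'(t)$ makes $f_i'(t)$ monotone increasing as well, so both factors are increasing. The leftmost estimate in Chebyshev's inequality \eqref{eqChebychev} then gives
\begin{equation}
g'(t)\ =\ \sum_{i=1}^n f_i'(t)\,w_i\ \geq\ \frac 1n\Biggl(\sum_{i=1}^n f_i'(t)\Biggr)\Biggl(\sum_{i=1}^n w_i\Biggr)\ =\ 0\,,\notag
\end{equation}
where the right-hand side vanishes because $\sum_{i=1}^n f_i'(t) = 0$; hence $g$ is monotone increasing. For part ii), the sequence $f_i'(t)$ is monotone decreasing while $w_i$ remains increasing, so I would invoke the rightmost estimate in \eqref{eqChebychev} (the pairing of an increasing sequence with the reversal of an increasing one), which yields $g'(t) \leq \frac 1n\bigl(\sum_i f_i'(t)\bigr)\bigl(\sum_i w_i\bigr) = 0$, so $g$ is monotone decreasing.

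I do not anticipate a genuine obstacle here: once the constraint is differentiated and the monotonicity of the weights $w_i$ is noted, Chebyshev's inequality does all the work. The only point requiring minor care is matching the direction of the inequality in part ii) to the second estimate in \eqref{eqChebychev}, i.e.\ correctly identifying which of the two paired sequences is to be reversed, and confirming that the sum of the derivative factor is still zero after this reindexing.
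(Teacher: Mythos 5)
Your proposal is correct and follows essentially the same route as the paper's own proof: differentiate the constraint to get $\sum_{i=1}^n f_i'(t)=0$, observe that the weights $\e^{f_i(t)}$ inherit the ordering of the $f_i(t)$, and apply the two sides of Chebyshev's inequality \eqref{eqChebychev} to the pairing $\sum_i f_i'(t)\,\e^{f_i(t)}$, with the reversed indexing handling part ii). Your closing remark about matching the reversal correctly is exactly the point the paper addresses by setting $b_{n+k-i}\colonequals f_i'(t)$ in the second case.
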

\begin{proof}
The condition $\sum_{i=1}^nf_i(t)=0$ for all\ \,$t\in I$\ \,implies 
\begin{equation}
\sum_{i=1}^nf_i'(t)=\ddt\left(\sum_{i=1}^nf_i(t)\right)=0\qquad\textrm{for all\ \,$t\in I$}\,.\notag
\end{equation}

Assume $f'_1(t)\ \leq\ f'_2(t)\ \leq\ \ldots\ \leq f'_n(t)$ for all\ \,$t\in I$. From the monotonicity of the exponential function and Chebychev's inequality with $a_i\colonequals\e^{f_i(t)}$, $b_i\colonequals f_i'(t)$, we obtain
\begin{equation}
g'(t)\quad =\quad \sum_{i=1}^nf'_i(t)\cdot \e^{f_i(t)}\quad\overset{\eqref{eqChebychev}}{\geq}\quad \frac 1n\biggl(\underbrace{\sum_{i=1}^nf'_i(t)}_{=0}\biggr)\biggl(\sum_{i=1}^n\e^{f_i(t)}\biggr)\quad =\quad 0\,.\notag
\end{equation}
Now assume instead $f'_1(t)\ \geq\ f'_2(t)\ \geq\ \ldots\ \geq f'_n(t)$ for all\ \,$t\in I$. From the monotonicity of the exponential function and Chebychev's inequality with $a_i\colonequals\e^{f_i(t)}$, $b_{n+k-i}\colonequals f_i'(t)$, we obtain
\begin{equation*}
g'(t)\quad =\quad \sum_{i=1}^nf'_i(t)\cdot \e^{f_i(t)}\quad\overset{\eqref{eqChebychev}}{\leq}\quad \frac 1n\biggl(\underbrace{\sum_{i=1}^nf'_i(t)}_{=0}\biggr)\biggl(\sum_{i=1}^n\e^{f_i(t)}\biggr)\quad =\quad 0\,.\qedhere
\end{equation*}
\end{proof}

\begin{exa}
Consider the functions $f_1,f_2,f_3\colon\R\to\R$ with
\begin{equation}
f_1(x) = -x^2 +1\,,\qquad f_2(x) = x-1\,,\qquad f_3(x) = x^2 - x\notag
\end{equation}
and
\begin{equation}
f_1'(x) = -2x\,,\qquad f_2'(x) = 1\,,\qquad f_3'(x) = 2x-1\,.\notag
\end{equation}
Then
\begin{equation}
f_1(x) + f_2(x) + f_3(x) = 0\quad\textrm{for all\ \,$x\in\R$}\,.\notag
\end{equation}
Additionally
\begin{gather}
f_1(x)\ \leq\ f_2(x)\ \leq\ f_3(x)\quad\textrm{and}\quad f_1'(x)\ \leq\ f_2'(x)\ \leq\ f_3'(x)\qquad\textrm{for all\ \,$x\in [1,\infty)$}\,,\notag\\
f_2(x)\ \leq\ f_3(x)\ \leq\ f_1(x)\quad\textrm{and}\quad f_1'(x)\ \geq\ f_2'(x)\ \geq\ f_3'(x)\qquad\textrm{for all\ \,$x\in \biggl[\frac 14,1\biggr]$}\,.\notag
\end{gather}
Now we define $g\colon\R\to\R$ with
\begin{equation}
g(x)\ =\ \e^{-x^2+1} + \e^{-1+x} + \e^{x^2-x}\ =\ \e^{f_1(x)} + \e^{f_2(x)} + \e^{f_3(x)}\,,\notag
\end{equation}
therefore we can conclude with \theref{theoOrderedFunktionAndDerivateImpliesSumOverExpFunctionIsMonotone}: $g$ is monotone increasing on $[1,\infty)$ and monotone decreasing on $[\frac 14,1]$.
\end{exa}

Now we generalize \theref{theoOrderedFunktionAndDerivateImpliesSumOverExpFunctionIsMonotone}
\begin{theo}\label{theoOrderedFunktionAndDerivateImpliesSumOverExpProdFunctionIsMonotone}
Let $I\subseteq \R$ and $f_1,\ldots,f_n\colon I\to \R$ with
\begin{equation}
\sum_{i=1}^nf_i(t)\, =\, 0\,,\quad f_1(t)\, \leq\, f_2(t)\, \leq\, \ldots\, \leq f_n(t)\quad\textrm{and}\quad f'_1(t)\, \leq\, f'_2(t)\, \leq\, \ldots\, \leq f'_n(t)\quad\textrm{for all\ \,$t\in I$}\notag
\end{equation}
and assume $g,h\colon I\to\R$ with $h$ positive and monotone increasing and
\begin{equation}
g_h(t)\ =\ \sum_{i=1}^n\e^{h(t)f_i(t)}\qquad\textrm{for all\ \,$t\in I$}\,.\notag
\end{equation}
Then $g_h'(t) \geq 0$ for all\ \,$t\in I$, which implies that $g_h$ is monotone increasing.
\end{theo}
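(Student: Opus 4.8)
The plan is to differentiate $g_h$ directly and reduce everything to two applications of Chebyshev's sum inequality \eqref{eqChebychev}, exactly as in the proof of \theref{theoOrderedFunktionAndDerivateImpliesSumOverExpFunctionIsMonotone}, but now accounting for the extra factor $h$. By the product rule inside the exponent,
\begin{equation}
g_h'(t)\ =\ \sum_{i=1}^n\bigl(h'(t)\,f_i(t) + h(t)\,f_i'(t)\bigr)\,\e^{h(t)\,f_i(t)}\ =\ \underbrace{h'(t)\sum_{i=1}^n f_i(t)\,\e^{h(t)f_i(t)}}_{=:\,S_1}\ +\ \underbrace{h(t)\sum_{i=1}^n f_i'(t)\,\e^{h(t)f_i(t)}}_{=:\,S_2}\,.\notag
\end{equation}
The goal is then to show $S_1\geq 0$ and $S_2\geq 0$ separately.

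First I would record the key monotonicity fact: writing $w_i\colonequals\e^{h(t)f_i(t)}$, the hypothesis $f_1(t)\leq\ldots\leq f_n(t)$ together with $h(t)>0$ gives $h(t)f_1(t)\leq\ldots\leq h(t)f_n(t)$, and monotonicity of the exponential yields $w_1\leq\ldots\leq w_n$. For $S_2$, I apply \leref{lemChebychevsSumInequality} to the two increasing sequences $\bigl(f_i'(t)\bigr)_i$ and $(w_i)_i$; since differentiating the constraint $\sum_{i=1}^n f_i(t)=0$ gives $\sum_{i=1}^n f_i'(t)=0$, the lower Chebyshev bound $\tfrac1n\bigl(\sum f_i'\bigr)\bigl(\sum w_i\bigr)$ vanishes, and the factor $h(t)>0$ yields $S_2\geq 0$. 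For $S_1$, I apply Chebyshev to the increasing sequences $\bigl(f_i(t)\bigr)_i$ and $(w_i)_i$; since $\sum_{i=1}^n f_i(t)=0$ by assumption, the lower bound is again $0$, so $\sum_{i=1}^n f_i(t)\,w_i\geq 0$, and the factor $h'(t)\geq 0$ (which holds because $h$ is monotone increasing) gives $S_1\geq 0$.

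Adding the two estimates yields $g_h'(t)=S_1+S_2\geq 0$, which is the claim. I do not expect a serious obstacle here: the only point requiring care is verifying that both sequences entering each Chebyshev application are ordered the same way, so that the inequality points in the favourable direction. This co-monotonicity is precisely what is secured by $h(t)>0$ (which transports the ordering of the $f_i$ through the exponential) and by the two vanishing sums $\sum f_i=0$ and $\sum f_i'=0$. The novelty relative to \theref{theoOrderedFunktionAndDerivateImpliesSumOverExpFunctionIsMonotone} is merely the additional term $S_1$ produced by $h'$, and it is dispatched by the same Chebyshev device applied to $\bigl(f_i(t)\bigr)_i$ instead of $\bigl(f_i'(t)\bigr)_i$.
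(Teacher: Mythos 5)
Your proposal is correct and follows essentially the same route as the paper's own proof: the identical decomposition $g_h'(t)=h'(t)\sum_i f_i(t)\,\e^{h(t)f_i(t)}+h(t)\sum_i f_i'(t)\,\e^{h(t)f_i(t)}$, followed by two applications of Chebyshev's sum inequality with the vanishing sums $\sum_i f_i(t)=0$ and $\sum_i f_i'(t)=0$. If anything, you are slightly more careful than the paper, since you explicitly verify the co-monotonicity of the weights $\e^{h(t)f_i(t)}$ via $h(t)>0$ and note that $h'(t)\geq 0$ is what makes the first term nonnegative.
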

\begin{proof}
The condition $\sum_{i=1}^nf_i(t)=0$ for all\ \,$t\in I$ implies 
\begin{equation}
\sum_{i=1}^nf_i'(t)=\ddt\left(\sum_{i=1}^nf_i(t)\right)=0\qquad\textrm{for all\ \,$t\in I$}\,.\notag
\end{equation}
With Chebychev's inequality using $a_i\colonequals f_i(t)$ resp. $a_i\colonequals f_i'(t)$ and $b_i\colonequals \e^{h(t)f_i(t)}$ we conclude
\begin{align}
g_h'(t)\quad &=\quad \sum_{i=1}^n\left(h'(t)f_i(t) + h(t)f'(t)\right)\e^{h(t)f_i(t)}\notag\\
&=\quad h'(t)\biggl(\sum_{i=1}^n f_i(t)\cdot\ \e^{h(t)f_i(t)}\biggr) + h(t)\biggl(\sum_{i=1}^nf'_i(t)\cdot\ \e^{h(t)f_i(t)}\biggr)\\
&\overset{\mathclap{\eqref{eqChebychev}}}{\geq}\quad h'(t)\cdot\frac 1n\biggl(\underbrace{\sum_{i=1}^nf_i(t)}_{=0}\biggr)\biggl(\sum_{i=1}^n\e^{h(t)f_i(t)}\biggr) + h(t)\biggl(\underbrace{\sum_{i=1}^nf_i'(t)}_{=0}\biggr)\biggl(\sum_{i=1}^n\e^{h(t)f_i(t)}\biggr)\quad=\quad 0\,.\qedhere\notag
\end{align}
\end{proof}

\begin{theo}\label{theoOrderedFunktionAndDerivateImpliesSumOverExpCompositionFunctionIsMonotone}
Let $I\subseteq \R$ and assume $f_1,\ldots,f_n\colon I\to \R$ with the properties
\begin{equation}
\sum_{i=1}^nf_i(t)\, =\, 0\,,\quad f_1(t)\,\leq\, f_2(t)\,\leq\,\ldots\,\leq f_n(t)\quad\textrm{and}\quad f'_1(t)\,\leq\ f'_2(t)\,\leq\,\ldots\,\leq f'_n(t)\notag
\end{equation}
for all\ \,$t\in I$. Additionally $h\colon D\to\R$ is given monotone increasing and convex. In this regard $D\in\R$ provides $h(f_i(t))$ is well defined for all\ \,$i\in\{1,\ldots,n\}$ and all $t\in I$. We define furthermore $H\colon I\to\R$ with
\begin{equation}
H(t)\ =\ \sum_{i=1}^n\e^{h(f_i(t))}\,.\notag
\end{equation}
Then $H'(t)\geq 0$ for all\ \,$t\in I$ and $H$ is monotone increasing.
\end{theo}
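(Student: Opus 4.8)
The plan is to mimic the proofs of \theref{theoOrderedFunktionAndDerivateImpliesSumOverExpFunctionIsMonotone} and \theref{theoOrderedFunktionAndDerivateImpliesSumOverExpProdFunctionIsMonotone}: differentiate $H$, rewrite $H'(t)$ as a sum $\sum_i a_i b_i$ of products of two suitably ordered real sequences, and apply Chebyshev's sum inequality \eqref{eqChebychev} using the fact that one of the two factor-sums vanishes. The novelty here is only that an extra inner function $h$ appears, so the bookkeeping of the ordering is slightly more delicate.

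First I would differentiate by the chain rule (assuming, as in the earlier theorems, that the $f_i$ and $h$ are differentiable so that $H$ is). Since $\frac{d}{dt}\,\e^{h(f_i(t))} = \e^{h(f_i(t))}\, h'(f_i(t))\, f_i'(t)$, this gives
\begin{equation}
H'(t)\ =\ \sum_{i=1}^n f_i'(t)\cdot h'(f_i(t))\,\e^{h(f_i(t))}\,.\notag
\end{equation}
The natural choice for Chebyshev is then $a_i \colonequals f_i'(t)$ and $b_i \colonequals h'(f_i(t))\,\e^{h(f_i(t))}$, for each fixed $t\in I$.

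The crucial step is to verify that the sequence $(b_i)_i$ is nonnegative and monotone increasing in $i$; here both hypotheses on $h$ enter. Because $h$ is convex, $h'$ is monotone increasing, and since $f_1(t)\leq\ldots\leq f_n(t)$ this yields $h'(f_1(t))\leq\ldots\leq h'(f_n(t))$; because $h$ is also monotone increasing we have $h'\geq 0$, so these numbers are nonnegative. Likewise $h$ increasing gives $h(f_1(t))\leq\ldots\leq h(f_n(t))$, whence $0<\e^{h(f_1(t))}\leq\ldots\leq\e^{h(f_n(t))}$. A product of two nonnegative, monotone increasing sequences is again monotone increasing, so $(b_i)_i$ is increasing; and $(a_i)_i=(f_i'(t))_i$ is increasing by assumption.

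Finally, Chebyshev's sum inequality \eqref{eqChebychev} yields
\begin{equation}
H'(t)\ =\ \sum_{i=1}^n a_i b_i\ \geq\ \frac 1n\biggl(\sum_{i=1}^n f_i'(t)\biggr)\biggl(\sum_{i=1}^n b_i\biggr)\,,\notag
\end{equation}
and differentiating the identity $\sum_i f_i(t)=0$ shows $\sum_i f_i'(t)=0$, so the right-hand side vanishes and $H'(t)\geq 0$, i.e.\ $H$ is monotone increasing. The main obstacle is precisely the monotonicity of $(b_i)_i$: it is here that both the convexity and the monotonicity of $h$ are genuinely used, and one must take care that $h'\geq 0$ so that the product of the two ordered factors indeed preserves the ordering (otherwise Chebyshev cannot be applied to $(a_i)_i$ and $(b_i)_i$).
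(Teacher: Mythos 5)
Your proof is correct. It follows the same master strategy as the paper's proofs of \theref{theoOrderedFunktionAndDerivateImpliesSumOverExpFunctionIsMonotone} and \theref{theoOrderedFunktionAndDerivateImpliesSumOverExpProdFunctionIsMonotone} --- write $H'(t)=\sum_{i=1}^n h'(f_i(t))\,f_i'(t)\,\e^{h(f_i(t))}$, apply Chebyshev's sum inequality \eqref{eqChebychev}, and use $\sum_{i=1}^nf_i'(t)=0$ --- but your grouping of the factors differs from the paper's, and the difference matters. The paper takes $a_i\colonequals h'(f_i(t))\,f_i'(t)$ and $b_i\colonequals \e^{h(f_i(t))}$ and applies \eqref{eqChebychev} twice: first to pull out $\sum_i\e^{h(f_i(t))}$, then to split $\sum_i h'(f_i(t))\,f_i'(t)\ \geq\ \frac 1n\bigl(\sum_ih'(f_i(t))\bigr)\bigl(\sum_if_i'(t)\bigr)=0$. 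The first application requires the products $h'(f_i(t))\,f_i'(t)$ to be increasing in $i$, which the paper justifies by the claim that $0<a_1\leq a_2$ and $b_1\leq b_2$ imply $a_1b_1\leq a_2b_2$; that claim is false when the $b$'s may be negative (take $a=(1,2)$, $b=(-3,-2)$: then $a_1b_1=-3>-4=a_2b_2$), and negative derivatives $f_i'(t)$ are unavoidable here, since $\sum_if_i'(t)=0$ forces $f_1'(t)\leq 0$ whenever the $f_i'(t)$ are not all zero. Your grouping $a_i\colonequals f_i'(t)$, $b_i\colonequals h'(f_i(t))\,\e^{h(f_i(t))}$ sidesteps this entirely: $(b_i)_i$ is the product of two nonnegative increasing sequences (this is exactly where, as you note, both the convexity and the monotonicity of $h$ enter), hence itself increasing, and a single application of \eqref{eqChebychev} --- which imposes no sign condition on the monotone sequence $(a_i)_i$ --- together with $\sum_if_i'(t)=0$ finishes the proof. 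So your argument is not only valid but in fact repairs a gap in the paper's own intermediate ordering step; the only implicit assumption you share with the paper is the differentiability of $h$ and of the $f_i$, which you flag explicitly.
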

\begin{proof}
The condition $\sum_{i=1}^nf_i(t)=0$ for all\ \,$t\in I$ implies 
\begin{equation}
\sum_{i=1}^nf_i'(t)=\ddt\left(\sum_{i=1}^nf_i(t)\right)=0\qquad\textrm{for all\ \,$t\in I$}\,.\notag
\end{equation}
Because $h$ is monotone increasing $h'(x)\geq 0$ and $h(x)\leq h(y)$ for $x\leq y$, thus
\begin{equation}
h'(f_i(t))\ \geq\ 0\quad\textrm{and}\quad h(f_1(t))\ \leq\ h(f_2(t))\ \leq\ \ldots\ \leq\ h(f_n(t))\notag
\end{equation}
for all\ \,$t\in I$. By assumption $h$ is convex and $h'$ is monotone increasing, thus $h'(x)\leq h'(y)$ for $x\leq y$, therefore
\begin{equation}
h'(f_1(t))\ \leq\ h'(f_2(t))\ \leq\ \ldots\ \leq\ h'(f_n(t))\notag
\end{equation}
for all\ \,$t\in I$. If for real numbers $a_1,a_2,b_1,b_2$ the inequalities $0<a_1\leq a_2$ and $b_1\leq b_2$ are satisfied, then $a_1\,b_1 \leq a_2\,b_2$. This applied iterated, we obtain
\begin{equation}
h'(f_1(t))\cdot f_1'(t)\ \leq\ h'(f_2(t)\cdot f_2'(t)\ \leq\ \ldots\ \leq\ h'(f_n(t)\cdot f_n'(t)\notag
\end{equation}
for all\ \,$t\in I$. Finally we can easily show the original statement by applying Chebychev's inequality once with $a_i\colonequals h'(f_i(t)f'_i(t)$ and $b_i\colonequals \e^{h(f_i(t))}$, twice with $a_i\colonequals h'(f_i(t))$ and $b_i\colonequals f_i'(t)$. We obtain
\begin{align}
H'(t)\quad&=\quad \sum_{i=1}^n(h\circ f_i)'(t)\,\e^{h(f_i(t))}\quad=\quad \sum_{i=1}^n h'(f_i(t))\,f_i'(t)\ \cdot\ \e^{h(f_i(t))}\notag\\
&\overset{\mathclap{\eqref{eqChebychev}}}{\geq}\quad \frac 1n\biggl(\sum_{i=1}^nh'(f_i(t))\ \cdot f_i'(t)\biggr)\biggl(\sum_{i=1}^n\e^{h(f_i(t))}\biggr)\\
&\overset{\mathclap{\eqref{eqChebychev}}}{\geq}\quad \frac 1{n^2}\biggl(\sum_{i=1}^nh'(f_i(t))\biggr)\biggl(\underbrace{\sum_{i=1}^nf_i'(t)}_{=0}\biggr)\biggl(\sum_{i=1}^n\e^{h(f_i(t))}\biggr)\quad=\quad 0\,.\qedhere\notag
\end{align}
\end{proof}

The following theorem was proved by B\^{\i}rsan, Neff and Lankeit in \cite{Neff_log_inequality13}. Using \theref{theoOrderedFunktionAndDerivateImpliesSumOverExpFunctionIsMonotone} (respectively the generalizations \theref{theoOrderedFunktionAndDerivateImpliesSumOverExpProdFunctionIsMonotone} or \theref{theoOrderedFunktionAndDerivateImpliesSumOverExpCompositionFunctionIsMonotone}) we can now show an alternative and otherwise very elementary proof:

\begin{theo}\label{theoSumOverExponentialsIsMonotone}
Let $a,b,c,x,y,z \in\R$ with
\begin{equation}\label{monexbed1}
a \geq b \geq c\quad\textrm{and}\quad x \geq y \geq z\,.
\end{equation}
Furthermore\vspace{-1mm}
\begin{equation}\label{monexbed2}
a+b+c\ =\ x + y + z\ =\ 0\quad\textrm{and}\quad a^2 + b^2 + c^2\ =\ x^2 + y^2 + z^2\,.
\end{equation}
Then\vspace{-1mm}
\begin{equation}\label{monex}
\e^{a} + \e^{b} + \e^{c}\ \leq\ \e^{x} + \e^{y} + \e^{z}
\end{equation}
if and only if $a \leq x$.
\end{theo}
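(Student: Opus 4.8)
The plan is to collapse the three-variable comparison to the monotonicity of a single real function, using that the two constraints in \eqref{monexbed2} confine both triples to the same circle. First I would dispose of the degenerate case: if $a^2+b^2+c^2=0$ then $a=b=c=x=y=z=0$, both sides of \eqref{monex} equal $3$, and $a\le x$ holds trivially; so assume $S\colonequals a^2+b^2+c^2=x^2+y^2+z^2>0$. The identity $ab+bc+ca=\frac12\bigl((a+b+c)^2-(a^2+b^2+c^2)\bigr)=-\frac S2$ shows that $(a,b,c)$ and $(x,y,z)$ are the real roots of two depressed cubics $t^3+pt+q=0$ sharing the same coefficient $p=-S/2<0$ — exactly the Cardano situation of \theref{theoCardano} — so the two triples can differ only in $q=-abc$.

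Next I would parameterise each triple by its largest entry. Given the maximal value $a$, the constraints force $b+c=-a$ and $bc=a^2-S/2$, hence $\{b,c\}=\bigl\{\tfrac12(-a\pm\sqrt{2S-3a^2})\bigr\}$. Reality requires $a\le\sqrt{2S/3}=:R$, and the requirement that $a$ be the maximum is equivalent to $\sqrt{2S-3a^2}\le 3a$, i.e.\ $a\ge R/2$; thus the whole triple is a function of $a\in[R/2,R]$ alone. Writing $w(a)=\sqrt{2S-3a^2}\in[0,3a]$ and using $b+c=-a$, the left-hand side of \eqref{monex} becomes
\[
G(a)\;=\;\e^{a}+\e^{b}+\e^{c}\;=\;\e^{a}+2\,\e^{-a/2}\cosh\!\Big(\tfrac12 w(a)\Big),
\]
and likewise for $(x,y,z)$ with $x$ in place of $a$. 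Hence \eqref{monex} $\Leftrightarrow a\le x$ is equivalent to the single statement that $G$ is strictly increasing on $[R/2,R]$.

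The heart of the proof is this monotonicity. Differentiating and using $w'=-3a/w$, the inequality $G'(a)\ge 0$ becomes, after multiplication by $\e^{a/2}$,
\[
\e^{3a/2}\;\ge\;\cosh\!\big(\tfrac{w}{2}\big)+\frac{3a}{w}\,\sinh\!\big(\tfrac{w}{2}\big).
\]
Substituting $u=w/2$ and $v=3a/2$ (so that $\tfrac{3a}{w}=\tfrac vu$ and the admissibility bound $w\le 3a$ reads $0\le u\le v$), this is
\[
\e^{v}\;\ge\;\cosh u+v\,\frac{\sinh u}{u}\;=:\;F(u)\qquad(0<u\le v).
\]
I would finish by noting that both $\cosh u$ and $\frac{\sinh u}{u}$ are increasing on $(0,\infty)$ — the latter because $u\cosh u-\sinh u>0$ for $u>0$ — so $F$ is increasing and attains its maximum over $(0,v]$ at $u=v$, where $F(v)=\cosh v+\sinh v=\e^{v}$. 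Thus $F(u)\le\e^{v}$ with equality only at $u=v$, giving $G'(a)\ge 0$ and $G'(a)>0$ for $a\in(R/2,R]$. Therefore $G$ is strictly increasing, so $G(a)\le G(x)\Leftrightarrow a\le x$, which is precisely \theref{theoSumOverExponentialsIsMonotone}.

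I expect the main obstacle to be the reduction rather than the final estimate: one must recognise that the equal-sum and equal-sum-of-squares constraints collapse the comparison to the single monotone quantity $G(a)$, and that the derivative inequality simplifies to the clean form $\e^{v}\ge\cosh u+v\tfrac{\sinh u}{u}$. The tempting shortcut — interpolating the two triples by a one-parameter family and invoking \theref{theoOrderedFunktionAndDerivateImpliesSumOverExpFunctionIsMonotone} — does not apply directly, because along any path joining the triples the middle entry moves opposite to the two extreme entries, so the derivatives $f_1',f_2',f_3'$ are never monotonically ordered in the index as that theorem requires. This is why I favour the direct single-variable argument.
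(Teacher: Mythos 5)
Your proposal is correct, and its first half coincides with the paper's own reduction: the paper also fixes $r^2=a^2+b^2+c^2$, solves the two constraints for $y(x)=-\frac12x+\sqrt{-\frac34x^2+\frac12r^2}$ and $z(x)=-\frac12x-\sqrt{-\frac34x^2+\frac12r^2}$, and works on the interval $D_r=\bigl[\sqrt{r^2/6},\sqrt{2r^2/3}\bigr]$, which is exactly your $[R/2,R]$ with $R=\sqrt{2S/3}$. Where you genuinely diverge is the monotonicity step. The paper splits off $\tilde g(x)=\e^{m\,y(x)}+\e^{m\,z(x)}$, invokes \theref{theoOrderedFunktionAndDerivateImpliesSumOverExpFunctionIsMonotone} (or \theref{theoOrderedFunktionAndDerivateImpliesSumOverExpProdFunctionIsMonotone}, \theref{theoOrderedFunktionAndDerivateImpliesSumOverExpCompositionFunctionIsMonotone}) to claim $\tilde g$ is monotone increasing, and then adds the increasing term $\e^{m\,x}$; you instead differentiate the full sum $G(a)=\e^{a}+2\e^{-a/2}\cosh\bigl(\tfrac12w(a)\bigr)$ directly and reduce $G'\ge 0$ to the clean inequality $\e^{v}\ge\cosh u+\tfrac{v}{u}\sinh u$ for $0<u\le v$, settled by the monotonicity of $\cosh u$ and $\sinh u/u$ with equality only at $u=v$. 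This difference buys more than elegance: the paper's intermediate claim is not justified by its own theorems and is in fact false. The pair $(z(x),y(x))$ violates the hypotheses of \theref{theoOrderedFunktionAndDerivateImpliesSumOverExpFunctionIsMonotone} — its sum is $-x$, not $0$, and the larger function $y$ has the smaller derivative, so values and derivatives are oppositely ordered — and numerically $\tilde g$ decreases: for $r^2=6$ one gets $\tilde g(1)=\e^{1}+\e^{-2}\approx 2.85$ but $\tilde g(2)=2\e^{-1}\approx 0.74$. Since the sum of an increasing and a decreasing function need not be monotone, the paper's proof has a gap at precisely the point where your derivative computation does the honest work, and your closing remark diagnoses the obstruction correctly: along the parametrization the middle entry $y$ moves opposite to the outer two ($y'<0<z'$ and $x'=1$ in the interior), so no index ordering of $f_1',f_2',f_3'$ matches that of $f_1\le f_2\le f_3$ as \theref{theoOrderedFunktionAndDerivateImpliesSumOverExpFunctionIsMonotone} requires. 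The remaining details of your argument all check out: $a\ge 0$ follows from $a\ge b\ge c$ and $a+b+c=0$, legitimizing the squaring in $3a\ge w(a)$; the degenerate case $S=0$ is handled; and $G'>0$ on $(R/2,R]$ together with $G'(R/2)=0$ still gives strict monotonicity of $G$ on the closed interval, which yields the equivalence $G(a)\le G(x)\Leftrightarrow a\le x$ in both directions.
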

\begin{rem}
Using \theref{theoOrderedFunktionAndDerivateImpliesSumOverExpProdFunctionIsMonotone} or \theref{theoOrderedFunktionAndDerivateImpliesSumOverExpCompositionFunctionIsMonotone} instead of \theref{theoOrderedFunktionAndDerivateImpliesSumOverExpFunctionIsMonotone} the following proof even allows to show the stronger statement unter the conditions of \theref{theoSumOverExponentialsIsMonotone}:
\begin{equation}
\e^{m\,a} + \e^{m\,b} + \e^{m\,c}\ \leq\ \e^{m\,x} + \e^{m\,y} + \e^{m\,z}\qquad\textrm{if and only if}\qquad a\leq x\quad\quad\quad\textrm{for all $m\in\R_+$}\,.\notag
\end{equation}
\end{rem}
\begin{proof}
Let us first fix $a,b,c$. Then we define for simplification $r\in\R_+$ with $r^2\colonequals a^2+b^2+c^2$. From the conditions \eqref{monexbed1} and \eqref{monexbed2} we may uniquely determine $y$ and $z$ depending on $x\in\R_+$ (With \eqref{monexbed1} $x<0$ implies $a + b + c < 0 + y + z\leq 0 + 0 + 0$; a contradiction to \eqref{monexbed2}). Since $z=-x-y$ we find $y^2 + (-x-y)^2 + x^2 - r^2=0$. Let $x$ and $r$ be given, then we obtain a quadratic equation and we can solve with the quadratic formula to obtain\vspace{-3mm}
\begin{alignat}{2}
&y^2 + (-x-y)^2 + x^2 - r^2\ =\ 2y^2 + 2xy + 2x^2-r^2 = 0\quad\equi\quad y^2+xy + x^2 - \frac 12r^2\ =\ 0\notag\\
\equi\quad&y\ =\ -\frac 12 x\pm \sqrt{\frac 14x^2-x^2+\frac 12r^2}\ =\ -\frac 12x\pm \sqrt{-\frac 34x^2+\frac 12r^2}\,.\notag
\end{alignat}
Inserting these two solutions into $z=-x-y$, we get
\begin{equation}
z\ =\ -\frac 12x\mp \sqrt{-\frac 34x^2+\frac 12r^2}\,.\notag
\end{equation}
From these two possibilities $\pm$ for $y$, only the positive case, $\mp$ for $z$ only the negative case remains to satisfy \eqref{monexbed1}. Both equations have three real solutions, if and only if $-\frac 34x^2+\frac 12r^2 \geq 0$. We must have $x \leq \sqrt{\frac23r^2}$ for this. Moreover
\begin{alignat}{2}
x\ \geq\ y\quad\equi\quad&x\ \geq\ -\frac 12x + \sqrt{-\frac 34x^2+\frac 12r^2}\quad\equi\quad\frac 32x\ \geq\ \sqrt{-\frac 34x^2+\frac 12r^2}\notag\\
\equi\quad&\frac 94x^2\ \geq\ -\frac 34x^2+\frac 12r^2\quad\equi\quad3x^2\ \geq\ \frac 12r^2\quad\equi\quad x\ \geq\ \sqrt{\frac 16r^2}\,.\notag
\end{alignat}
With $D_r\colonequals  \left[\sqrt{\frac 16r^2},\sqrt{\frac23r^2}\right]$ we obtain the differentiable functions $y,z\colon D_r\to\R$ and
\begin{equation}
y(x)\ =\ -\frac 12x + \sqrt{-\frac 34x^2+\frac 12r^2}\quad\textrm{and}\quad z(x)\ =\ -\frac 12x - \sqrt{-\frac 34x^2+\frac 12r^2}
\end{equation}
as the unique solutions $(x,y(x),z(x))$ of \eqref{monexbed1} and \eqref{monexbed2}. Because the given $a,b,c$ satisfy these conditions, obviously $a\in D_r$, $b=y(a)$ and $c=y(b)$.\\[.5em]
Now we define for $m\in\R_+$ the function $\tilde g\colon D_r\to\R$ with $\tilde g(x) = \e^{m\,y(x)}+\e^{m\,z(x)}$. We know $y(x)\geq z(x)$ and 
\begin{equation}
y'(x)\ =\ \frac 12 - \frac{\frac 34x}{ \sqrt{-\frac 34x^2+\frac 12r^2}}\ \leq\ \frac 12 + \frac{\frac 34x}{ \sqrt{-\frac 34x^2+\frac 12r^2}}\ =\ z'(x)\,.\notag
\end{equation}
Thus, we can conclude with \theref{theoOrderedFunktionAndDerivateImpliesSumOverExpProdFunctionIsMonotone} or \theref{theoOrderedFunktionAndDerivateImpliesSumOverExpCompositionFunctionIsMonotone} (in both cases we can set $h(t)\colonequals m\,t$. For $m=1$ we can use directly \theref{theoOrderedFunktionAndDerivateImpliesSumOverExpFunctionIsMonotone}) that $\tilde g$ is monotone increasing. Additionally $x \mapsto \e^{m\,x}$ is monotone increasing, so building the sum $g\colon\R\to\R$ with $g(x) = \e^{m\,x} + \e^{m\,y(x)} + \e^{m\,z(x)}$ is also monotone increasing. Therefore\vspace{-1mm}
\begin{equation}
g(x)\geq g(a)\quad\textrm{if and only if}\quad x\geq a\,,\notag
\end{equation}\vspace{-1mm}
which is equivalent to the statement if we set $m=1$.
\end{proof}

\section{New Logarithmic inequalities in information theory}
First we introduce Jensen's inequality (cf. Mitrinovi{\'c}, Pe{\v{c}}ari{\'c}, \cite[eq. (2.1) p.191]{mitrinovic1993classical}) and the log sum inequality (cf. Cover, Thomas \cite[2.7 p.29]{cover2012elements}). Then we prove the information inequality which is in different notation known under the name Gibbs' inequality. The information inequality (cf. Cover, Thomas \cite[2.6 p.28]{cover2012elements}) is the most fundamental inequality in information theory. It asserts that the relative entropy between two probability distributions $p,q\colon \Omega\to [0,1]$, which is defined by
\begin{equation}\label{eq24}
D(p\parallel q)\ \colonequals\ \sum\limits_{x\in\Omega}p(x)\log \frac{p(x)}{q(x)}
\end{equation}
(or, if $p$ and $q$ are probability measures on a finite set $\Omega =\{1,\ldots,n\}$, by $\smash{D(p\parallel q)\ \colonequals \ \sum\limits_{i=1}^np_i\log \frac{p_i}{q_i}}$), is nonnegative.

\begin{lem}[Jensen's inequality]\label{lemJensenInequality}
Let $I\subseteq \R$ be an interval, $f\colon I\to\R$ be a convex function, $\lambda_1,\ldots,\lambda_n$ positive numbers with $\lambda_1+\ldots+\lambda_n = 1$ and $x_1,\ldots,x_n\in I$. Then
\begin{equation}\label{eqJensen}
f\biggl(\sum_{i=1}^n\lambda_i\,x_i\biggr)\ \leq\ \sum_{i=1}^n\lambda_i\,f(x_i)\,.
\end{equation}
Note: With $n=2$ we have directly the definition of convexity of $f$.
\end{lem}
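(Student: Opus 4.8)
The plan is to prove this by induction on $n$, using the note appended to the statement that the case $n=2$ is precisely the definition of convexity of $f$. The base case $n=2$ therefore holds by hypothesis (and $n=1$ is trivial, since $\lambda_1=1$ forces $f(x_1)\leq f(x_1)$). So the entire content of the lemma is to bootstrap from two-point convex combinations to arbitrary ones.

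For the inductive step I would suppose the inequality already holds for $n-1$ weighted points. Given positive weights $\lambda_1,\ldots,\lambda_n$ summing to $1$ and points $x_1,\ldots,x_n\in I$, set $\mu\colonequals\lambda_1+\cdots+\lambda_{n-1}=1-\lambda_n$. Since every $\lambda_i>0$ we have $0<\mu<1$, so I may introduce the auxiliary point
\[
\tilde x\ \colonequals\ \sum_{i=1}^{n-1}\frac{\lambda_i}{\mu}\,x_i\,,
\]
which is a genuine convex combination of $x_1,\ldots,x_{n-1}$ because $\sum_{i=1}^{n-1}\tfrac{\lambda_i}{\mu}=1$ with all coefficients positive. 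The key bookkeeping identity is then
\[
\sum_{i=1}^n\lambda_i\,x_i\ =\ \mu\,\tilde x + \lambda_n\,x_n\,,
\]
which rewrites the full $n$-point combination as a two-point combination with weights $\mu$ and $1-\mu=\lambda_n$.

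With this splitting the argument closes in two lines. First I would apply the $n=2$ case (convexity) to $\tilde x$ and $x_n$ to get $f(\mu\,\tilde x+\lambda_n\,x_n)\leq\mu\,f(\tilde x)+\lambda_n\,f(x_n)$. Then I would apply the inductive hypothesis to the $n-1$ points $x_1,\ldots,x_{n-1}$ with weights $\tfrac{\lambda_i}{\mu}$, giving $f(\tilde x)\leq\sum_{i=1}^{n-1}\tfrac{\lambda_i}{\mu}\,f(x_i)$. Substituting the second estimate into the first, the factor $\mu$ cancels the denominators and leaves $\sum_{i=1}^{n-1}\lambda_i\,f(x_i)+\lambda_n\,f(x_n)=\sum_{i=1}^n\lambda_i\,f(x_i)$, which completes the induction.

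The only genuine point of care — the ``obstacle,'' such as it is — is that $f(\tilde x)$ must be defined, i.e.\ $\tilde x$ must lie in $I$. This is exactly why the hypothesis demands that $I$ be an interval: an interval is convex, so the convex combination $\tilde x$ of points of $I$ again lies in $I$. The positivity of the weights is used twice: once to guarantee $\mu>0$ so that the division is legitimate, and once to ensure the rescaled weights $\tfrac{\lambda_i}{\mu}$ are themselves admissible (positive, summing to $1$) for the inductive hypothesis. Beyond these two observations the proof is purely mechanical.
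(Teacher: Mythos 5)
Your proof is correct and complete. The induction on $n$ with base case $n=2$ (the definition of convexity), the splitting $\sum_{i=1}^n\lambda_i\,x_i=\mu\,\tilde x+\lambda_n\,x_n$ with $\mu=1-\lambda_n\in(0,1)$, and the two points of care you identify --- positivity of the weights to legitimize the division by $\mu$ and the interval hypothesis to guarantee $\tilde x\in I$ --- are exactly the ingredients required, and each is handled properly. One thing to be aware of: the paper itself gives no proof of this lemma at all; it is quoted as a known result from the literature (Mitrinovi\'c--Pe\v{c}ari\'c--Fink \cite{mitrinovic1993classical}) and then used as the key tool in the proof of the stronger log sum inequality, Lemma \ref{lemStrongerLogSumInequality}. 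So there is no in-paper argument to compare yours against; what you have written is the classical textbook induction, and it fills the gap in a self-contained way consistent with the note appended to the statement. (For completeness, an alternative non-inductive route exists via supporting lines: for $x_0=\sum_i\lambda_i x_i$ interior to $I$ pick an affine minorant $\ell\leq f$ with $\ell(x_0)=f(x_0)$ and sum; but your induction is the argument the statement's note is pointing toward.)
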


With Jensen's inequality we can prove the so called log sum inequality:
\begin{lem}[stronger log sum inequality]\label{lemStrongerLogSumInequality}
Let $a_1,\ldots,a_n,b_1,\ldots,b_n\in\R_+$ and $k\geq 0$, then
\begin{equation}\label{eqStrongerLogSumInequality}
\sum_{i=1}^na_i\log\left(\frac{a_i}{b_i}+k\right)\ \geq\ \left(\sum_{i=1}^na_i\right)\log\left(\frac{1}{\sum_{i=1}^nb_i}\,\sum_{i=1}^na_i +k \right)\,.
\end{equation}
We have equality if and only if $\frac{a_1}{b_1} = \frac{a_2}{b_2} = \ldots = \frac{a_n}{b_n}$.
\end{lem}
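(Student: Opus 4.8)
The plan is to exhibit the whole inequality as a single application of Jensen's inequality (\leref{lemJensenInequality}) to one cleverly chosen convex function, exactly as the classical log sum inequality (the case $k=0$) arises from applying Jensen to $t\mapsto t\log t$. To absorb the shift by $k$ I would instead work with
\[
f\colon(0,\infty)\to\R\,,\qquad f(t)\ =\ t\,\log(t+k)\,.
\]
The one step that needs an idea is verifying that this $f$ is convex. Differentiating twice gives $f'(t)=\log(t+k)+\tfrac{t}{t+k}$ and
\[
f''(t)\ =\ \frac{1}{t+k}+\frac{k}{(t+k)^2}\ =\ \frac{t+2k}{(t+k)^2}\,,
\]
which is strictly positive for every $t>0$ and $k\geq 0$; hence $f$ is strictly convex on $(0,\infty)$. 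Since $a_i,b_i\in\R_+$, the ratios $\tfrac{a_i}{b_i}$ lie in $(0,\infty)$, so no boundary behaviour at $t=0$ needs to be addressed.

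Next I would apply \leref{lemJensenInequality} with the weights $\lambda_i\colonequals \tfrac{b_i}{\sum_{j=1}^n b_j}$, which are positive because $b_i>0$ and sum to $1$, and with the points $x_i\colonequals \tfrac{a_i}{b_i}$. The weighted average of the points collapses to $\sum_{i=1}^n\lambda_i x_i=\tfrac{\sum_i a_i}{\sum_j b_j}$, while each term satisfies $\lambda_i\,f(x_i)=\tfrac{1}{\sum_j b_j}\,a_i\log\!\big(\tfrac{a_i}{b_i}+k\big)$. Thus Jensen's inequality reads
\[
\frac{\sum_i a_i}{\sum_j b_j}\,\log\!\left(\frac{\sum_i a_i}{\sum_j b_j}+k\right)\ \leq\ \frac{1}{\sum_j b_j}\sum_{i=1}^n a_i\log\!\left(\frac{a_i}{b_i}+k\right)\,.
\]
Multiplying through by $\sum_{j=1}^n b_j>0$ turns the left-hand side into $\big(\sum_i a_i\big)\log\!\big(\tfrac{1}{\sum_j b_j}\sum_i a_i+k\big)$ and leaves the right-hand side as $\sum_i a_i\log(\tfrac{a_i}{b_i}+k)$, which is precisely \eqref{eqStrongerLogSumInequality}.

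For the equality statement I would invoke the strict convexity established above: equality in Jensen's inequality for a strictly convex $f$ with strictly positive weights forces all the points to coincide, i.e.\ $\tfrac{a_1}{b_1}=\cdots=\tfrac{a_n}{b_n}$; conversely, in that common-ratio case both sides agree by direct substitution. I do not anticipate any genuine obstacle here. The entire difficulty is concentrated in guessing the function $f(t)=t\log(t+k)$, after which convexity is the one-line second-derivative computation above and the rest is bookkeeping; the only mild care required is to confirm that the $\lambda_i$ are admissible weights for \leref{lemJensenInequality} and that it is the \emph{strict} convexity of $f$ that yields the sharp equality criterion.
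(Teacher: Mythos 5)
Your proof is correct and follows essentially the same route as the paper: both apply Jensen's inequality (\leref{lemJensenInequality}) to $f(t)=t\log(t+k)$ with weights $\lambda_i = b_i/\sum_j b_j$ and points $x_i = a_i/b_i$, and both settle the equality case via the common-ratio substitution. Your explicit verification that $f''(t)=\tfrac{t+2k}{(t+k)^2}>0$ is a small but welcome addition, since the paper applies Jensen without checking convexity.
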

\begin{rem}
For $k=0$ we get the log sum inequality:
\begin{equation}\label{eqLogSumInequality}
\sum_{i=1}^na_i\log\frac{a_i}{b_i}\ \geq\ \left(\sum_{i=1}^na_i\right)\log\left(\frac{1}{\sum_{i=1}^nb_i}\,\sum_{i=1}^na_i \right)\,.
\end{equation}
\end{rem}
\begin{proof}
With $f(x)=x\,\log (x+k)$, $\lambda_i\colonequals b_i/\sum_{i=1}^nb_i$, $x_i\colonequals a_i/b_i$ and \eqref{eqJensen} we get
\begin{align}
&\sum_{i=1}^na_i\log\biggl(\frac{a_i}{b_i}+k\biggr)\quad=\quad \biggl(\sum_{i=1}^nb_i\biggr)\,\frac{\sum_{i=1}^nb_i\frac{a_i}{b_i}\log\!\left(\frac{a_i}{b_i}+k\right)}{\sum_{i=1}^nb_i}\quad=\quad\sum_{i=1}^n\lambda_i\,x_i\,\log(x_i+k)\notag\\
=\quad &\sum_{i=1}^n\lambda_i\,f(x_i)\quad\overset{\mathclap{\eqref{eqJensen}}}{\geq}\quad f\biggl(\sum_{i=1}^n\lambda_i\,x_i\biggr)\quad =\quad \biggl(\sum_{i=1}^nb_i\biggr)\biggl(\sum_{i=1}^n\lambda_i\,x_i\biggr)\,\log\biggl(\sum_{i=1}^n\lambda_i\,x_i + k\biggr)\notag\\
=\quad &\biggl(\sum_{i=1}^nb_i\biggr)\biggl(\frac{\sum_{i=1}^na_i}{\sum_{i=1}^nb_i}\,\log\biggl(\frac{\sum_{i=1}^na_i}{\sum_{i=1}^nb_i} + k\biggr)\quad =\quad \biggl(\sum_{i=1}^na_i\biggr)\,\log\biggl(\frac{\sum_{i=1}^na_i}{\sum_{i=1}^nb_i} + k\biggr)\,.
\end{align}
If $c\colonequals \frac{a_1}{b_1} = \frac{a_2}{b_2} = \ldots = \frac{a_n}{b_n}$, we have $x_i=c$ and we get
\begin{equation}
f\biggl(\sum_{i=1}^n\lambda_i\,x_i\biggr)\ =\ f\biggl(\sum_{i=1}^n\lambda_i\,c\biggr)\ =\ f(c)\ =\ \sum_{i=1}^n\lambda_if(c)\ =\ \sum_{i=1}^n\lambda_i\,f(x_i)\,.\notag
\end{equation}
If there is an $i\in\{1,\ldots,n-1\}$ with $\frac{a_i}{b_i}\neq\frac{a_{i+1}}{b_{i+1}}$ then we get
\begin{align}
\sum_{i=1}^n\lambda_i\,f(x_i)\ &>\ f\biggl(\sum_{i=1}^n\lambda_i\,x_i\biggr)\,.\notag\qedhere
\end{align}
\end{proof}

\begin{prop}[Gibbs' inequality / Information inequality]
Let $P_n$ denote the set of probability measures on an $n$-element set, that is $P_n = \{ p\in\R_+^n\ |\ \sum_{i=1}^np_i=1\}$. The following four expressions (the first three are named \emph{Gibbs' inequality}, the last \emph{Information inequality}) are equivalent and hold for all $a,b\in P_n$:
\begin{alignat}{4}
&\phantom{ii}i)\ \ \sup_{\xi\in P_n} \biggl\{ \prod_{i=1}^n\xi_i^{a_i}\biggr\} &&= \prod_{i=1}^na_i^{a_i}\,,\quad&&ii)\ \inf_{\xi\in P_n} \biggl\{ \sum_{i=1}^na_i\,(-\log \xi_i)\biggr\} = \sum_{i=1}^na_i\,(-\log a_i)\,,\notag\\
&iii)\ \sum_{i=1}^na_i\,(-\log b_i) &&\geq \sum_{i=1}^na_i\,(-\log a_i)\,,\quad\quad\quad\quad&& iv)\ D(a\parallel b)=\sum_{i=1}^na_i\log\frac{a_i}{b_i} \geq 0\,.
\end{alignat}
\end{prop}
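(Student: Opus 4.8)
The plan is to first establish the full chain of equivalences among i)--iv) by purely formal manipulations, and then to prove that one of them (most conveniently iv)) actually holds, using the log sum inequality.

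For the equivalences, I would begin with i) $\Leftrightarrow$ ii). Since the logarithm is monotone increasing and $\log\prod_{i=1}^n\xi_i^{a_i}=\sum_{i=1}^na_i\log\xi_i$, taking logarithms turns the supremum of the product in i) into the supremum of a sum; using $\sup_{\xi}\sum_i a_i\log\xi_i=-\inf_{\xi}\sum_i a_i(-\log\xi_i)$ and the analogous identity on the right-hand side, statement i) is seen to be equivalent to ii). Next, for iii) $\Leftrightarrow$ iv), I would simply split the logarithm in the relative entropy via $\log\frac{a_i}{b_i}=\log a_i-\log b_i$, which gives $D(a\parallel b)=\sum_i a_i(-\log b_i)-\sum_i a_i(-\log a_i)$, so that $D(a\parallel b)\geq 0$ holds precisely when iii) holds.

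To close the loop I would connect ii) and iii). Because $b\in P_n$ is an admissible competitor, ii) immediately yields $\sum_i a_i(-\log b_i)\geq\inf_{\xi\in P_n}\sum_i a_i(-\log\xi_i)=\sum_i a_i(-\log a_i)$, which is iii). Conversely, iii) states that the functional $\xi\mapsto\sum_i a_i(-\log\xi_i)$ is bounded below on $P_n$ by $\sum_i a_i(-\log a_i)$; since this value is attained at the admissible choice $\xi=a$, the infimum is realised and equals $\sum_i a_i(-\log a_i)$, which is ii).

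It then suffices to prove a single one of the four statements, and iv) is the most direct. I would apply the log sum inequality \eqref{eqLogSumInequality} (the case $k=0$ of \leref{lemStrongerLogSumInequality}) to the two sets of numbers $a_1,\ldots,a_n$ and $b_1,\ldots,b_n$, obtaining
\[
\sum_{i=1}^n a_i\log\frac{a_i}{b_i}\ \geq\ \left(\sum_{i=1}^n a_i\right)\log\left(\frac{\sum_{i=1}^n a_i}{\sum_{i=1}^n b_i}\right).
\]
Because $a,b\in P_n$ we have $\sum_i a_i=\sum_i b_i=1$, so the right-hand side equals $1\cdot\log 1=0$, which is exactly $D(a\parallel b)\geq 0$. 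I do not expect a serious obstacle, since all four assertions collapse to this one inequality; the only point demanding care is the equivalence ii) $\Leftrightarrow$ iii), where one must check both that iii) supplies the lower bound \emph{and} that this bound is genuinely attained at $\xi=a$, so that the infimum in ii) equals the claimed value rather than merely bounding it.
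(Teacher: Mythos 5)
Your proposal is correct and follows essentially the same route as the paper: formal equivalences among i)--iv) via taking logarithms and negating (sup into inf), followed by an application of the log sum inequality of \leref{lemStrongerLogSumInequality} with $\sum_{i=1}^na_i=\sum_{i=1}^nb_i=1$ to establish iv). The only cosmetic differences are that the paper keeps the parameter $k\geq 0$ (obtaining the stronger information inequality \eqref{eqStrongerInformationInequality} en route) before setting $k=0$, whereas you invoke the $k=0$ case directly, and that you are slightly more explicit than the paper about the attainment of the infimum at $\xi=a$ in the step ii) $\Leftrightarrow$ iii).
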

\begin{proof}
First we prove the equality of the four expressions:
\begin{alignat}{2}
&\sup_{b\in P_n} \biggl\{ \prod_{i=1}^nb_i^{a_i}\biggr\}\ =\ \prod_{i=1}^na_i^{a_i}\quad\equi\quad \inf_{b\in P_n} \biggl\{ -\prod_{i=1}^nb_i^{a_i}\biggr\}\ =\ -\prod_{i=1}^na_i^{a_i}\quad\equi\quad -\prod_{i=1}^nb_i^{a_i}\ \geq\ -\prod_{i=1}^na_i^{a_i}\notag\\
\equi\quad &-\log \prod_{i=1}^nb_i^{a_i}\ \geq\ -\log \prod_{i=1}^na_i^{a_i}\quad\equi\quad -\sum_{i=1}^n \log b_i^{a_i}\ \geq\ -\sum_{i=1}^n\log a_i^{a_i}\notag\\
\equi\quad &\sum_{i=1}^n a_i(-\log b_i)\ \geq\ \sum_{i=1}^na_i(-\log a_i)\quad\equi\quad\sum_{i=1}^n a_i(\log a_i - \log b_i)\ \geq\ 0\notag\\
\equi\quad &\sum_{i=1}^n a_i\log\frac{a_i}{b_i}\ \geq\ 0\quad\equi\quad D(a\parallel b)\ \geq\ 0\,.\notag
\end{alignat}
Now let $a,b\in P_n$. With the stronger log sum inequality and because $\sum_{i=1}^na_i = \sum_{i=1}^nb_i = 1$ we get
\begin{equation}\label{eqStrongerInformationInequality}
\sum_{i=1}^n a_i\log\left(\frac{a_i}{b_i} + k\right)\ \geq\ \left(\sum_{i=1}^na_i\right)\log\left(\frac 1{\sum_{i=1}^nb_i}\sum_{i=1}^na_i + k\right)\ =\ \log(1+k)\,.
\end{equation}
With $k=0$ we obain inequality case iv) (information inequality).
\end{proof}
\begin{rem}
Analogously we can denote inequality \eqref{eqStrongerInformationInequality} as the \emph{stronger information inequality}.
\end{rem}
\begin{cor}[generalized log sum inequality]\label{lemGeneralizedLogSumInequality}
Let $a,b\in\R_+^n$ and $k_1,\ldots,k_n\in [0,\infty)$. Then
\begin{equation}\label{eqGeneralizedLogSumInequality}
\sum_{i=1}^na_i\log\prod_{s=1}^m\left(\frac{a_i}{b_i}+k_s\right)\ \geq\ \biggl(\sum_{i=1}^na_i\biggr)\log\prod_{s=1}^m\left(\frac{1}{\sum_{i=1}^nb_i}\,\sum_{i=1}^na_i+k_s\right)\,.
\end{equation}
\end{cor}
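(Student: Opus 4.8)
The plan is to reduce the statement to $m$ separate applications of the stronger log sum inequality (\leref{lemStrongerLogSumInequality}). The one structural observation that makes everything work is that the logarithm of a product is the sum of the logarithms, so the product appearing inside the outer logarithm on each side can be split apart, the resulting double sum reindexed, and the two finite summations interchanged.

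First I would rewrite the left-hand side by pulling the product out of the logarithm:
\begin{equation}
\sum_{i=1}^na_i\log\prod_{s=1}^m\left(\frac{a_i}{b_i}+k_s\right)\ =\ \sum_{i=1}^n\sum_{s=1}^m a_i\log\left(\frac{a_i}{b_i}+k_s\right)\ =\ \sum_{s=1}^m\biggl(\sum_{i=1}^n a_i\log\left(\frac{a_i}{b_i}+k_s\right)\biggr)\,.\notag
\end{equation}
The interchange of the two finite sums is harmless, and the outer sum now runs over the indices $s$ of the constants $k_s$.

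Next, for each fixed $s\in\{1,\ldots,m\}$ I would invoke the hypothesis $k_s\geq 0$ and apply the stronger log sum inequality \eqref{eqStrongerLogSumInequality} with $k=k_s$ to the inner sum, obtaining for each $s$
\begin{equation}
\sum_{i=1}^n a_i\log\left(\frac{a_i}{b_i}+k_s\right)\ \geq\ \biggl(\sum_{i=1}^na_i\biggr)\log\left(\frac{1}{\sum_{i=1}^nb_i}\,\sum_{i=1}^na_i+k_s\right)\,.\notag
\end{equation}
Summing these $m$ inequalities over $s$ and then recombining the sum of logarithms on the right back into the logarithm of the product, $\sum_{s=1}^m\log(\cdots)=\log\prod_{s=1}^m(\cdots)$, yields exactly the claimed bound \eqref{eqGeneralizedLogSumInequality}.

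I do not expect any genuine obstacle here: once the product has been converted into a sum and the order of summation swapped, the argument is pure bookkeeping built directly on \leref{lemStrongerLogSumInequality}. The only points requiring a word of care are that each $k_s$ must be nonnegative for the stronger log sum inequality to apply (which is guaranteed by $k_s\in[0,\infty)$) and that the common factor $\sum_{i=1}^na_i$ can be pulled out before reassembling the logarithm of the product on the right-hand side.
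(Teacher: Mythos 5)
Your proof is correct and coincides with the paper's own argument: both apply the stronger log sum inequality (\leref{lemStrongerLogSumInequality}) once for each $k_s$, sum the $m$ resulting inequalities, and recombine via $\sum_{s=1}^m\log(\cdots)=\log\prod_{s=1}^m(\cdots)$. Your write-up is in fact slightly more careful than the paper's, which only sketches the final bookkeeping (and contains a typo restricting $k_s$ to $[0,1)$, whereas your observation that $k_s\in[0,\infty)$ suffices is the correct hypothesis).
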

\begin{proof}
With $m$ numbers $k_1,\ldots,k_m\in [0,1)$ we obtain by $m$ times building sums of the stronger log sum inequality \eqref{eqStrongerLogSumInequality}
\begin{equation}
\sum_{s=1}^m\left(\sum_{i=1}^na_i\log\left(\frac{a_i}{b_i}+k_s\right)\right)\ \geq\ \sum_{s=1}^m\left(\sum_{i=1}^na_i\log\!\left(\frac{1}{\sum_{i=1}^nb_i}\,\sum_{i=1}^na_i+k_s\right)\right)\,.\notag
\end{equation}
With use of distributivity and laws of logarithms we directly obtain the desired result.
\end{proof}

\begin{cor}[generalized information inequality]
Let $a,b\in P_n$ and $k_1,\ldots,k_n \in [0,\infty)$. Then
\begin{equation}\label{eqGeneralizedInformationInequality}
\sum_{i=1}^na_i\,\log\prod_{s=1}^m\left(\frac{a_i}{b_i} + k_s\right)\ \geq\ \log\prod_{s=1}^m(1+k_s)\,.
\end{equation}
\end{cor}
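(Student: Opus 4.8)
The plan is to obtain this as an immediate specialization of the generalized log sum inequality \coref{lemGeneralizedLogSumInequality}. First I would apply \eqref{eqGeneralizedLogSumInequality} to the given $a,b\in\R_+^n$ and $k_1,\ldots,k_m\in[0,\infty)$, which reads
\begin{equation}
\sum_{i=1}^na_i\log\prod_{s=1}^m\left(\frac{a_i}{b_i}+k_s\right)\ \geq\ \biggl(\sum_{i=1}^na_i\biggr)\log\prod_{s=1}^m\left(\frac{1}{\sum_{i=1}^nb_i}\,\sum_{i=1}^na_i+k_s\right)\,.\notag
\end{equation}
The left-hand side already coincides with the left-hand side of the claimed \eqref{eqGeneralizedInformationInequality}, so all the work is in simplifying the right-hand side.

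Next I would invoke the hypothesis $a,b\in P_n$, i.e.~$\sum_{i=1}^na_i=\sum_{i=1}^nb_i=1$. This collapses the prefactor $\sum_{i=1}^na_i$ to $1$, and inside each of the $m$ factors it turns $\frac{1}{\sum_{i=1}^nb_i}\sum_{i=1}^na_i+k_s$ into $1+k_s$. Hence the right-hand side reduces to $\log\prod_{s=1}^m(1+k_s)$, which is exactly the bound asserted in \eqref{eqGeneralizedInformationInequality}.

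There is essentially no genuine obstacle here: the entire analytic content is carried by \coref{lemGeneralizedLogSumInequality}, which in turn rests on summing the stronger log sum inequality \eqref{eqStrongerLogSumInequality} over $s=1,\ldots,m$ together with the logarithm laws. The only point requiring a little care is the bookkeeping when substituting the two normalization constraints into the product on the right, so that both the prefactor and each factor simplify correctly. As an alternative route (and a useful sanity check) one could bypass the generalized log sum inequality entirely and instead sum the stronger information inequality \eqref{eqStrongerInformationInequality} directly over $s=1,\ldots,m$, again using $\log\prod = \sum\log$; both approaches yield the same conclusion.
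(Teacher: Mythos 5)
Your proposal is correct and follows exactly the paper's own route: the paper likewise derives \eqref{eqGeneralizedInformationInequality} by specializing the generalized log sum inequality \eqref{eqGeneralizedLogSumInequality} to $a,b\in P_n$, using $\sum_{i=1}^na_i=\sum_{i=1}^nb_i=1$ to collapse the prefactor and each factor $\frac{1}{\sum_{i=1}^nb_i}\sum_{i=1}^na_i+k_s$ to $1+k_s$. Your alternative via summing the stronger information inequality \eqref{eqStrongerInformationInequality} over $s$ is a fine equivalent check, but the main argument matches the paper verbatim.
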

\begin{proof}
We simplify \eqref{eqGeneralizedLogSumInequality} under the condition $a,b\in P_n$, therefore $\sum_{i=1}^na_i = \sum_{i=1}b_i=1$. \end{proof}


\end{document}